\newtheorem{theorem}{Theorem}[section]
\newtheorem{lemma}[theorem]{Lemma}
\theoremstyle{definition}
\newtheorem{definition}[theorem]{Definition}
\theoremstyle{remark}
\DeclareMathOperator{\diam}{diam}
\DeclareMathOperator{\dist}{dist}
\DeclareMathOperator{\conv}{conv}
\DeclareMathOperator{\st}{st}
\DeclareMathOperator{\Seq}{Seq}
\DeclareMathOperator{\gal}{gal}
\DeclareMathOperator{\sho}{^{\circ}}
\DeclareMathOperator{\sh}{sh}
\DeclareMathOperator{\mon}{mon}
\DeclareMathOperator{\cl}{cl}
\begin{document}
\title[]{Hyper-extensions in metric fixed point theory }
\author[]{Andrzej Wi\'{s}nicki}
\dedicatory{ Dedicated to Professor Sompong Dhompongsa on the occasion of
his 65th birthday}
\subjclass[2010]{Primary 47H09, 03H05; Secondary 46B20, 47H10}
\address{Andrzej Wi\'{s}nicki, Institute of Mathematics, Maria Curie-Sk{\l }%
odowska University, 20-031 Lublin, Poland}
\email{awisnic@hektor.umcs.lublin.pl}
\keywords{Fixed point theory, Nonexpansive mapping, Nonstandard analysis.}

\begin{abstract}
We apply a modern axiomatic system of nonstandard analysis in metric fixed
point theory. In particular, we formulate a nonstandard iteration scheme for
nonexpansive mappings and present a nonstandard approach to fixed-point
problems in direct sums of Banach spaces.
\end{abstract}

\date{}
\maketitle

\section{Introduction}

Nonstandard analysis was originated in the 1960s by A. Robinson. By
considering a hyper-extension\ of real numbers he was able to provide
logically rigorous foundations for infinitely small and infinitely large
numbers.

Nonstandard methods came to Banach space theory from the work of W. A. J.
Luxemburg who introduced the notion of a nonstandard hull. Another approach,
based on the concept of a Banach space ultraproduct, was proposed by J.
Bretagnolle, D. Dacunha-Castelle and J.-L. Krivine. In 1980, B. Maurey \cite%
{Ma} applied the Banach space ultraproduct construction to solve several
difficult problems in metric fixed point theory. His methods have been
extended by numerous authors to obtain a lot of strong results in that
theory (see \cite{AkKh, GoKi, Si}). It seems that nonstandard analysis has
some conceptual advantages over the ultraproduct method because it provides
us with techniques which are not very easy to express in the ultraproduct
setting. But the early approaches to nonstandard analysis appeared too
technical to many mathematicians and required a good background in logic.

At present there exist several interesting frameworks for nonstandard
analysis. In this paper we shall use a modern axiomatic approach based on
Alpha-Theory introduced by V. Benci and M. Di Nasso in \cite{BeDi}. In this
approach, all axioms of ZFC (except foundation) are assumed and for every
set $A$, there exists a set $A^{\ast }$ called the hyper-extension of $A$.
The resulting theory overcomes the distinction between \textquotedblleft
standard\textquotedblright\ and \textquotedblleft
nonstandard\textquotedblright\ objects and is closer to mathematical
practice. Our aim is to signal new possibilities in metric fixed point
theory by applying modern infinitesimal techniques.

Section 2 contains a brief presentation of basic notions including a
nonstandard hull of a Banach space, an intra-convergence of an $^{\ast }%
\mathbb{N}$-sequence and a counterpart of Mazur's lemma. In Section 3 we
formulate a nonstandard iteration scheme for nonexpansive mappings in
uniformly convex spaces. Although it is not clear to what extent nonstandard
analysis can be done constructively, there are recently some attempts to
give infinitesimal analysis computational content (see \cite{Ch, Sa, Se}).
In Section 4 we present a nonstandard approach to fixed-point problems in
direct sums of Banach spaces. We show how to use the notion of intra-convex
sets and a counterpart of Mazur's lemma to improve the results in \cite{PrWi}%
. The reader may compare this approach, closer to the original idea, with
its classical translation in \cite{Wi1}. A brief presentation of
Alpha-Theory is given in the Appendix.

\section{Nonstandard preliminaries}

In this paper we work in the system of nonstandard analysis based on
Alpha-Theory introduced by Benci and Di Nasso in \cite{BeDi}. In this
approach, for every set $A$, there exists a set $^{\ast }A$ called the
hyper-extension (or the star-transform) of $A$, see Appendix A.

The most important for our purposes is the following theorem called the
transfer principle.

\begin{theorem}
\label{Transfer}For every bounded formula $\sigma (x_{1},...,x_{k})$ and for
any sets $a_{1},...,a_{k},$%
\begin{equation*}
\sigma (a_{1},...,a_{k})\Longleftrightarrow \sigma (^{\ast
}a_{1},...,\,^{\ast }a_{k}).
\end{equation*}
\end{theorem}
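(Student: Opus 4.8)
The plan is to argue by induction on the logical complexity of the bounded formula $\sigma$, in close analogy with the proof of the \L o\'s theorem for ultraproducts. First I would fix the description of the star-map furnished by the Alpha axioms of Appendix~A: for an element $a$ one has $^{\ast}a=\lim_{n\uparrow\alpha}a$, the $\alpha$-limit of the constant sequence, while for a set $A$ the hyper-extension $^{\ast}A$ is characterized internally as the collection of all $\alpha$-limits $\lim_{n\uparrow\alpha}\psi (n)$ of sequences $\psi$ taking values in $A$. Two consequences of the axioms will be used throughout: the star-map is injective, and membership is governed \emph{pointwise}, in the sense that the relation $\lim_{n\uparrow\alpha}\varphi (n)\in\lim_{n\uparrow\alpha}\psi (n)$ is controlled by the behaviour of $\varphi (n)\in\psi (n)$ along the indices.

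For the base case I would treat the two kinds of atomic formulas separately. For $x_{1}=x_{2}$ the equivalence $a_{1}=a_{2}\Leftrightarrow\,^{\ast}a_{1}=\,^{\ast}a_{2}$ is exactly injectivity of the star-map. For $x_{1}\in x_{2}$ the equivalence $a_{1}\in a_{2}\Leftrightarrow\,^{\ast}a_{1}\in\,^{\ast}a_{2}$ follows from the internal description of $^{\ast}a_{2}$ together with the composition (coherence) axiom, applied to the constant sequences at $a_{1}$ and $a_{2}$.

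The inductive step for the propositional connectives $\neg,\wedge,\vee$ is immediate, since a biconditional valid for each subformula is inherited by any Boolean combination of them. The substance of the argument lies in the bounded quantifiers. Writing $\sigma$ in the form $(\forall x\in x_{i})\,\tau$ (the existential case being dual), I must establish
\[
(\forall x\in a_{i})\,\tau (x,a_{1},\dots ,a_{k})\;\Longleftrightarrow\;(\forall x\in\,^{\ast}a_{i})\,\tau (x,\,^{\ast}a_{1},\dots ,\,^{\ast}a_{k}).
\]
The forward implication is handled by the induction hypothesis: every potential counterexample $b\in a_{i}$ transfers to a counterexample $^{\ast}b\in\,^{\ast}a_{i}$, so validity on the right forces validity on the left.

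The backward implication is where I expect the main difficulty. A generic element $c\in\,^{\ast}a_{i}$ need not be of the form $^{\ast}b$; by the internal characterization it is instead an $\alpha$-limit $c=\lim_{n\uparrow\alpha}\psi (n)$ with $\psi$ valued in $a_{i}$. To conclude I would push the formula $\tau$ through the $\alpha$-limit, showing that $\tau (c,\,^{\ast}a_{1},\dots ,\,^{\ast}a_{k})$ holds precisely when $\tau (\psi (n),a_{1},\dots ,a_{k})$ holds $\alpha$-almost everywhere in $n$; this is exactly the step that requires the composition axiom and the pointwise description of $\alpha$-limits, and it is the analogue of the \L o\'s equivalence between satisfaction in the ultrapower and satisfaction on a large index set. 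Once this transfer across the $\alpha$-limit is secured, the assumed validity of $\tau$ at every member of $a_{i}$ yields its validity at the arbitrary nonstandard witness $c$, which closes the induction.
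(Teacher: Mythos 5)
Note first that the paper does not actually prove this theorem: it is quoted from the literature (restated in the Appendix as Theorem \ref{TransferA}, with references to \cite[Th.~6.2]{BeDi} and \cite[Cor.~8.3.13]{KaRe}), the Appendix remarking only that transfer ``is an application of a famous theorem of \L o\'{s}.'' Your strategy --- induction on bounded formulas, \L o\'{s}-style --- is indeed the strategy of the cited proof, but your induction is carried on too weak a statement, and that is a genuine gap. You induct on the transfer biconditional itself, i.e.\ on ``$\sigma (a_{1},\dots ,a_{k})\Leftrightarrow \sigma ({}^{\ast }a_{1},\dots ,{}^{\ast }a_{k})$ for all sets $a_{i}$,'' so your induction hypothesis speaks only of standard parameters and their stars. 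But in the hard direction of the quantifier step the witness $c\in {}^{\ast }a_{i}$ is a general $\alpha $-limit $J(\psi )$, and the equivalence you need --- that $\tau (c,{}^{\ast }a_{1},\dots ,{}^{\ast }a_{k})$ holds iff $\tau (\psi (n),a_{1},\dots ,a_{k})$ holds $\alpha $-almost everywhere --- is not an instance of that hypothesis: it concerns a non-constant sequence argument, about which your hypothesis says nothing. You correctly identify this as the crux, but then you assume it (``once this transfer across the $\alpha $-limit is secured\dots''); as structured, the induction cannot secure it, and the problem recurs at every level of quantifier nesting, since inner bounded quantifiers range over arbitrary internal sets $J(\varphi )$, not over stars of standard sets. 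The repair, and the way the cited proof actually runs, is to strengthen the statement being proved to a full \L o\'{s} theorem for $\alpha $-limits: for \emph{all} sequences $\varphi _{1},\dots ,\varphi _{k}$,
\begin{equation*}
\sigma (J(\varphi _{1}),\dots ,J(\varphi _{k}))\ \Longleftrightarrow \
\left\{ n\in \mathbb{N}:\sigma (\varphi _{1}(n),\dots ,\varphi
_{k}(n))\right\} \in Q_{\alpha },\qquad Q_{\alpha }=\left\{ A\subseteq
\mathbb{N}:\alpha \in {}^{\ast }A\right\} ,
\end{equation*}
with transfer recovered at the very end by specializing to constant sequences.

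Once the statement is strengthened in this way, the parts you dismissed as immediate become exactly the places where the axioms J1--J5 do real work, and your sketch does not supply them. The step for $\neg $ requires that $Q_{\alpha }$ be an ultrafilter ($A\notin Q_{\alpha }$ iff $\mathbb{N}\setminus A\in Q_{\alpha }$), the step for $\wedge $ requires closure of $Q_{\alpha }$ under intersection, and even your atomic base cases need the same apparatus: deciding equality and membership of $\alpha $-limits ``pointwise'' means deciding them $Q_{\alpha }$-almost everywhere, and axiom J5 only gives $J(\varphi )=\{J(\psi ):\psi (n)\in \varphi (n)\text{ for all }n\}$, so passing from ``for all $n$'' to ``for almost all $n$'' already requires an argument (modify $\psi $ on the exceptional set). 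None of this appears in your proposal: you never define ``$\alpha $-almost everywhere,'' never introduce $Q_{\alpha }$, and never derive its filter properties from the axioms. A smaller slip: your labels for the two implications in the quantifier step are interchanged --- the direction that needs generic $\alpha $-limits is the forward one, from $(\forall x\in a_{i})\,\tau $ to $(\forall x\in {}^{\ast }a_{i})\,\tau $ --- though the mathematical content you attach to each direction is the right one.
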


We will use this theorem several times. See, e.g., \cite{BeDiFo, Gol} for
more details how to apply the transfer principle correctly.

Let $X$ be a real Banach space and let $^{\ast }X$ be its hyper-extension
endowed with a function
\begin{equation*}
^{\ast }\Vert \cdot \Vert :~^{\ast }X\rightarrow ~^{\ast }\mathbb{R}
\end{equation*}%
called an internal norm (or $\ast $-norm) of $^{\ast }X.$ There is a common
practice to omit \textquotedblleft stars\textquotedblright\ when no
confusion can arise and we abbreviate $^{\ast }\Vert \cdot \Vert $ to $\Vert
\cdot \Vert $. Recall that an element $x\in \,^{\ast }X$ is bounded if $%
\Vert x\Vert $ is bounded in $^{\ast }\mathbb{R}$. It is infinitesimal if $%
\Vert x\Vert $ is infinitesimal in $^{\ast }\mathbb{R}$, see Appendix A. Let
$\gal(^{\ast }X)$ denote the set of bounded elements and $\mon(0)$ the set
of infinitesimal elements of $^{\ast }X$. Notice that $\gal(^{\ast }X)$ and $%
\mon(0)$ are vector spaces over $\mathbb{R}$ and we may define $\widetilde{X}
$ as the quotient vector space
\begin{equation*}
\gal(^{\ast }X)/\mon(0).
\end{equation*}%
Let $\pi :\gal(^{\ast }X)\rightarrow \widetilde{X}$ denote the quotient
linear mapping and define a norm on $\widetilde{X}$ by $\Vert y\Vert =\st%
(\Vert x\Vert )$ for all $x\in \gal(^{\ast }X)$, $y=\pi (x)$, where $\st%
(\Vert x\Vert )$ is the standard part of $\Vert x\Vert $ in $\mathbb{R}$.
The vector space $\widetilde{X}$ with the above norm becomes a Banach space
and is called the nonstandard hull of $X$, see, e.g., \cite{FaKe, HeMo, Ng}.
It is clear that $X$ is isometric to a subspace of $\widetilde{X}$ via the
mapping $z\rightarrow $ $\pi (^{\ast }z)$. Virtually, $\pi $ is an extension
of the standard part mapping $\st:\mon(X)\rightarrow X$ and we denote $\pi
(x)$ by $\sh(x)$ or $\sho x$. Thus we have $\sh(x)=\mon(\st(x))$ for every $%
x\in \mon(X)$. We refer to
\begin{equation*}
\sh:\gal(^{\ast }X)\rightarrow \widetilde{X}
\end{equation*}%
as the shadow mapping. Set $^{\circ }A=\left\{ ^{\circ }x:x\in A\right\} $
for any set $A\subset \gal(^{\ast }X)$ and $\widetilde{B}=\,^{\circ }(^{\ast
}B\cap \gal(^{\ast }X))$ for any $B\subset X.$

Let $\mathbb{R}_{+}\,$\ denote the set of positive reals. By an $^{\ast }%
\mathbb{N}$-sequence $(x_{n})_{n\in \,^{\ast }\mathbb{N}}$ in $Y$ we mean a
function $x:$ $^{\ast }\mathbb{N}\rightarrow Y$.

\begin{definition}
An $^{\ast }\mathbb{N}$-sequence $(x_{n})_{n\in \,^{\ast }\mathbb{N}}$ in $%
^{\ast }X$ is said to intra-converge (or $\ast $-converge) to $a\in \,^{\ast
}X$ if%
\begin{equation*}
\forall \varepsilon \in \,^{\ast }\mathbb{R}_{+}\ \exists k\in \,^{\ast }%
\mathbb{N\ \forall }n\in \,^{\ast }\mathbb{N\ }(n\geq k\Rightarrow
\left\Vert x_{n}-a\right\Vert <\varepsilon ).
\end{equation*}
\end{definition}

In a similar way, we can define intra-convergence for the weak topology. Let
$\mathcal{T}$ denote the weak topology on a Banach space $X$.

\begin{definition}
An $^{\ast }\mathbb{N}$-sequence $(x_{n})_{n\in \,^{\ast }\mathbb{N}}$ in $%
^{\ast }X$ is said to weakly intra-converge (or $\ast $-weakly converge) to $%
a\in \,^{\ast }X$ if%
\begin{equation*}
\forall U\in ~^{\ast }\mathcal{T\ }\exists k\in \,^{\ast }\mathbb{N\ \forall
}n\in \,^{\ast }\mathbb{N\ }(a\in U\wedge n\geq k\Rightarrow x_{n}\in U).
\end{equation*}
\end{definition}

Notice that if $(x_{n})$ is a sequence in $X$ converging (resp., weakly
converging) to $x_{0}$, then it follows from transfer that its
hyper-extension $(x_{n})_{n\in \,^{\ast }\mathbb{N}}$ intra-converges
(resp., weakly intra-converges) to $^{\ast }x_{0}$ in $^{\ast }X$.

\begin{definition}
We say that a set $A\subset \,^{\ast }X$ is intra-convex (or $\ast $-convex)
if
\begin{equation*}
\forall \alpha ,\beta \in \,^{\ast }[0,1]\ \mathbb{\forall }x,y\in A\
(\alpha +\beta =1\Rightarrow \alpha x+\beta y\in A).
\end{equation*}
\end{definition}

For $A\subset \,^{\ast }X,$ define%
\begin{equation*}
\conv_{\text{int}}(A)=\bigcup\nolimits_{n\in \,^{\ast }\mathbb{N}}\left\{
\sum_{i=0}^{n}\lambda _{i}x_{i}:\lambda _{i}\in \,^{\ast }[0,1],x_{i}\in
A,0\leq i\leq n,\sum_{i=0}^{n}\lambda _{i}=1\right\} .
\end{equation*}%
The following lemma is a simple application of the transfer principle and
Mazur's lemma.

\begin{lemma}
\label{Lem1}Assume that an internal $^{\ast }\mathbb{N}$-sequence $%
(x_{n})_{n\in \,^{\ast }\mathbb{N}}$ \ in $^{\ast }X$ intra-converges weakly
to $a$. Then $^{\circ }a\in \,^{\circ }\conv_{\text{int}}(\left\{ x_{n}:n\in
\,^{\ast }\mathbb{N}\right\} ).$
\end{lemma}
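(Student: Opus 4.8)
The plan is to deduce the statement from the classical Mazur lemma by a single application of the transfer principle (Theorem \ref{Transfer}), followed by an infinitesimal choice of the approximation parameter. Recall that in any Banach space $X$, if a sequence weakly converges to a point $y$, then $y$ lies in the norm closure of the convex hull of its terms; equivalently, $y$ can be approximated arbitrarily well in norm by finite convex combinations of the terms. The whole content of the lemma is the hyperfinite, internal version of this last assertion, and transfer is precisely the tool that produces it.

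First I would write Mazur's lemma as a bounded formula, taking $X$, the weak topology $\mathcal{T}$, and the auxiliary sets $\mathbb{N},\mathbb{R}_{+},[0,1]$ (together with the set $X^{\mathbb{N}}$ of sequences) as parameters. Namely, $\sigma$ asserts that for all $s\in X^{\mathbb{N}}$ and all $y\in X$, if
\[
\forall U\in \mathcal{T}\ \exists k\in \mathbb{N}\ \forall n\in \mathbb{N}\ (y\in U\wedge n\geq k\Rightarrow s(n)\in U),
\]
then
\[
\forall \varepsilon \in \mathbb{R}_{+}\ \exists m\in \mathbb{N}\ \exists \lambda_{0},\dots ,\lambda_{m}\in [0,1]\ \Big(\sum_{i=0}^{m}\lambda_{i}=1\ \wedge\ \big\|y-\sum_{i=0}^{m}\lambda_{i}s(i)\big\|<\varepsilon \Big).
\]
Since every quantifier ranges over a fixed set, the formula is bounded, so Theorem \ref{Transfer} yields $\sigma (^{\ast }X,{}^{\ast }\mathcal{T},{}^{\ast }\mathbb{N},{}^{\ast }\mathbb{R}_{+},{}^{\ast }[0,1])$. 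Here $^{\ast }(X^{\mathbb{N}})$ is exactly the collection of internal $^{\ast }\mathbb{N}$-sequences in $^{\ast }X$, the hypothesis becomes literally the weak intra-convergence of the Definition, and each hyperfinite sum $\sum_{i=0}^{m}\lambda_{i}s(i)$ with $m\in \,^{\ast }\mathbb{N}$ and $\lambda_{i}\in \,^{\ast }[0,1]$ is by construction a member of $\conv_{\text{int}}(\{x_{n}:n\in \,^{\ast }\mathbb{N}\})$.

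Applying the transferred statement to the given internal sequence $(x_{n})$, which weakly intra-converges to $a$, I obtain: for every $\varepsilon \in \,^{\ast }\mathbb{R}_{+}$ there is $b_{\varepsilon }\in \conv_{\text{int}}(A)$ with $\|a-b_{\varepsilon }\|<\varepsilon$, where $A=\{x_{n}:n\in \,^{\ast }\mathbb{N}\}$. It then suffices to take $\varepsilon$ infinitesimal: choosing an infinite $N\in \,^{\ast }\mathbb{N}$ and setting $\varepsilon =1/N$ produces $b\in \conv_{\text{int}}(A)$ with $\|a-b\|$ infinitesimal, i.e. $a-b\in \mon(0)$. Since $^{\circ }a$ is defined we have $a\in \gal(^{\ast }X)$, whence $\|b\|\leq \|a\|+1$ is bounded and $b\in \gal(^{\ast }X)$ as well; therefore $^{\circ }a=\,^{\circ }b\in \,^{\circ }\conv_{\text{int}}(A)$, which is the assertion.

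I expect the only delicate point to be the bookkeeping in the first step: verifying that the weak-convergence hypothesis of $\sigma$ transfers \emph{exactly} to the weak intra-convergence of the Definition, and that the transferred convex combinations — now hyperfinite, with an arbitrary hyperinteger number of terms and $^{\ast }[0,1]$-coefficients — are precisely the elements of $\conv_{\text{int}}(A)$. Once these identifications are made the conclusion is immediate, so the substance of the argument is really the correct encoding of Mazur's lemma as a bounded formula rather than any analytic estimate.
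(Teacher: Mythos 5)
Your proof is correct and takes essentially the same route as the paper: transfer Mazur's lemma to obtain, for every $\varepsilon \in \,^{\ast }\mathbb{R}_{+}$, an element of $\conv_{\text{int}}(\{x_{n}:n\in \,^{\ast }\mathbb{N}\})$ within $\varepsilon$ of $a$, then fix an infinitesimal $\varepsilon$ and pass to shadows. Your additional bookkeeping (the explicit bounded formula and the check that $b\in \gal(^{\ast }X)$, so that $^{\circ }a=\,^{\circ }b$ makes sense) merely spells out what the paper's terser proof leaves implicit.
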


\begin{proof}
Let $(x_{n})_{n\in \,^{\ast }\mathbb{N}}$ be an internal $^{\ast }\mathbb{N}$%
-sequence in $^{\ast }X$ intra-converging weakly to $a\in \,^{\ast }X$. It
follows from the transfer of Mazur's lemma that for every $\varepsilon \in
\,^{\ast }\mathbb{R}_{+}$ there exists $k\in \,^{\ast }\mathbb{N}$ and $%
\lambda _{0},...,\lambda _{k}\in \,^{\ast }[0,1]$ with $\sum_{i=0}^{k}%
\lambda _{i}=1$ such that $\left\Vert \sum_{i=0}^{k}\lambda
_{i}x_{i}-a\right\Vert \leq \varepsilon .$ Fix a positive $\varepsilon
\simeq 0.$ Then there exists $y\in \conv_{\text{int}}(\left\{ x_{n}:n\in
\,^{\ast }\mathbb{N}\right\} )$ such that $\left\Vert y-a\right\Vert \simeq
0.$ Hence%
\begin{equation*}
^{\circ }a\in \,^{\circ }\conv_{\text{int}}(\left\{ x_{n}:n\in \,^{\ast }%
\mathbb{N}\right\} ).
\end{equation*}
\end{proof}

A routine application of the transfer principle shows that if $A$ is
internal and $\ast $-relatively compact (i.e., for every internal $^{\ast }%
\mathbb{N}$-sequence $(x_{n})_{n\in \,^{\ast }\mathbb{N}}$ of elements in $A$%
, there exists an internal intra-convergent $^{\ast }\mathbb{N}$-subsequence
$(x_{n_{k}})_{k\in \,^{\ast }\mathbb{N}}$), then $\conv_{\text{int}}(A)$ is $%
\ast $-relatively compact, too. We will use this fact together with Lemma %
\ref{Lem1} in Section 4.

\section{Nonstandard Picard Iteration}

Let $(M,\rho )$ be a metric space. An internal mapping $T:\,^{\ast
}M\rightarrow \,^{\ast }M$ is said to be an intra-contraction if there
exists $k\in ~^{\ast }(0,1)$ such that
\begin{equation*}
^{\ast }\rho (Tx,Ty)\leq k\,^{\ast }\rho (x,y)
\end{equation*}%
for all $x,y\in \,^{\ast }M.$

Let $T:\,^{\ast }M\rightarrow \,^{\ast }M$ be an intra-contraction and fix $%
x_{0}\in \,^{\ast }M$. Set $x_{n+1}=Tx_{n}$ for each $n\in \,^{\ast }\mathbb{%
N}.$ Since $T$ is internal, we obtain the $^{\ast }\mathbb{N}$-sequence $%
(T^{n}x_{0})_{n\in \,^{\ast }\mathbb{N}}$ by internal induction.

The following theorem is an internal version of the Banach's Contraction
Principle. We leave its proof to the reader.

\begin{theorem}
\label{contr}Let $(M,\rho )$ be a complete metric space and $T:\,^{\ast
}M\rightarrow \,^{\ast }M$ an intra-contraction. Then $T$ has a unique fixed
point in $^{\ast }M$ and for each $x_{0}\in \,^{\ast }M$ the $^{\ast }%
\mathbb{N}$-sequence $(T^{n}x_{0})_{n\in \,^{\ast }\mathbb{N}}$
intra-converges to this fixed point.
\end{theorem}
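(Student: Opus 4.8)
The plan is to derive the statement directly from the classical Banach Contraction Principle by a single application of the transfer principle (Theorem \ref{Transfer}), taking the complete metric space $(M,\rho)$ as a standard parameter. First I would record the classical theorem as a bounded sentence $\sigma(M,\rho)$ all of whose quantifiers range over sets attached to $M$: a quantifier $\forall k\in(0,1)$ over contraction constants, a quantifier $\forall T\in M^{M}$ over self-maps of $M$ (here $M^{M}$ denotes the set of all functions $M\to M$), and the inner quantifiers $\forall\varepsilon>0$, $\exists N\in\mathbb{N}$, $\forall n\geq N$ used to express both the contraction hypothesis and the convergence of the Picard iterates. Schematically,
\[
\sigma(M,\rho):\ \forall k\in(0,1)\ \forall T\in M^{M}\ \bigl[\,(\forall x,y\in M\ \rho(Tx,Ty)\leq k\,\rho(x,y))\Rightarrow \psi(T)\,\bigr],
\]
where $\psi(T)$ asserts that $T$ has a unique fixed point $p$ and that $\rho(T^{n}x_{0},p)\to 0$ for every $x_{0}\in M$. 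Since $(M,\rho)$ is complete, $\sigma(M,\rho)$ is true.

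The second step is to read off the transferred sentence $\sigma(^{\ast}M,^{\ast}\rho)$. Under the star-transform the range $M^{M}$ becomes $^{\ast}(M^{M})$, which is exactly the collection of internal self-maps of $^{\ast}M$, so the clause $\forall T\in M^{M}$ turns into a quantifier over all internal $T:\,^{\ast}M\to\,^{\ast}M$; likewise $(0,1)$ becomes $^{\ast}(0,1)$ and $\mathbb{N}$ becomes $^{\ast}\mathbb{N}$. The transformed hypothesis $\forall x,y\in\,^{\ast}M\ ^{\ast}\rho(Tx,Ty)\leq k\,^{\ast}\rho(x,y)$ with $k\in\,^{\ast}(0,1)$ is precisely the intra-contraction condition, and the transform of $\rho(T^{n}x_{0},p)\to0$, namely
\[
\forall\varepsilon\in\,^{\ast}\mathbb{R}_{+}\ \exists N\in\,^{\ast}\mathbb{N}\ \forall n\in\,^{\ast}\mathbb{N}\ (n\geq N\Rightarrow\,^{\ast}\rho(T^{n}x_{0},p)<\varepsilon),
\]
is verbatim the definition of intra-convergence of $(T^{n}x_{0})_{n\in\,^{\ast}\mathbb{N}}$ to $p$. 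Thus $\sigma(^{\ast}M,^{\ast}\rho)$, which holds by transfer, says exactly that every intra-contraction on $^{\ast}M$ has a unique fixed point to which all Picard $^{\ast}\mathbb{N}$-sequences intra-converge, which is the assertion of the theorem.

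The main thing to watch is foundational rather than analytic: one must make sure that $\sigma(M,\rho)$ really is a bounded formula, i.e. that every quantifier is relativised to a genuine set ($M$, $M^{M}$, $(0,1)$, $\mathbb{N}$, and so on) so that Theorem \ref{Transfer} applies, and then check that each transferred quantifier and predicate lands on the intended internal object — internal $T$, the constant in $^{\ast}(0,1)$, indices in $^{\ast}\mathbb{N}$, and convergence in the sense of intra-convergence. I do not expect any genuine obstacle here, since all of the analytic content is already carried by the classical theorem; the only care needed is in the formalisation.

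If one prefers an argument internal to $^{\ast}M$, the same conclusion follows by transcribing the usual proof. By internal induction $^{\ast}\rho(x_{n+1},x_{n})\leq k^{n}\,^{\ast}\rho(x_{1},x_{0})$, whence for $m>n$
\[
^{\ast}\rho(x_{m},x_{n})\leq\frac{k^{n}}{1-k}\,^{\ast}\rho(x_{1},x_{0}).
\]
Transfer of the elementary fact that $k^{n}\to0$ for every $k\in(0,1)$ shows that $(k^{n})$ intra-converges to $0$ even when $k\simeq1$, so $(x_{n})$ is an internal Cauchy $^{\ast}\mathbb{N}$-sequence and hence intra-converges to some $p\in\,^{\ast}M$ by the internal completeness of $^{\ast}M$ (itself the transfer of completeness of $M$). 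Internal continuity of $T$ gives $Tp=p$, and if $Tq=q$ as well then $^{\ast}\rho(p,q)=\,^{\ast}\rho(Tp,Tq)\leq k\,^{\ast}\rho(p,q)$ forces $^{\ast}\rho(p,q)=0$ since $1-k>0$, yielding uniqueness.
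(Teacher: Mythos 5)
Your proposal is correct. Note that the paper offers no proof of this theorem to compare against---it is explicitly left to the reader---so you have in effect supplied the omitted argument, and in two independent ways. The transfer route is the one most in the spirit of the paper, and the only point that genuinely needs care is the one you flag: ``$T^{n}x_{0}$'' is not a primitive of the language, so inside $\sigma(M,\rho)$ the Picard orbit must be encoded by a bounded quantifier over $s\in M^{\mathbb{N}}$ satisfying $s(0)=x_{0}$ and $\forall n\in\mathbb{N}\ s(n+1)=T(s(n))$; after transfer this quantifier ranges over ${}^{\ast}(M^{\mathbb{N}})$, i.e.\ over internal ${}^{\ast}\mathbb{N}$-sequences, and the recursion pins such an $s$ down to exactly the sequence $(T^{n}x_{0})_{n\in{}^{\ast}\mathbb{N}}$ that the paper defines by internal induction, which is what makes the transferred conclusion literally the statement of the theorem. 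Likewise your identification of ${}^{\ast}(M^{M})$ with the internal self-maps of ${}^{\ast}M$ is the standard fact needed so that an arbitrary intra-contraction is an admissible instance of the transferred universal quantifier. Your second, internal argument is also sound, and its one delicate point is again the one you isolate: for $k\simeq 1$ the sequence $(k^{n})$ does not become small along standard indices, but it does intra-converge to $0$ (the threshold $N$ may be an unbounded hypernatural), which is exactly what intra-Cauchyness demands; combined with the transferred completeness of $M$ (a statement quantified over internal sequences, which is where internality of $(x_{n})$ is used) this yields intra-convergence to the fixed point, and uniqueness follows since $1-k>0$ in ${}^{\ast}\mathbb{R}$ even when $k\simeq 1$.
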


Now let $C$ be a nonempty bounded closed and convex subset of a Banach space
$X$ and $T:C\rightarrow C$ a nonexpansive mapping, i.e.,%
\begin{equation*}
\Vert Tx-Ty\Vert \leq \Vert x-y\Vert
\end{equation*}%
for all $x,y\in C$. It is well known that unlike in the case of
contractions, the Picard iteration $(T^{n}x_{0})_{n\in \,\mathbb{N}},$ $%
x_{0}\in C$, may fail to converge. In the last few decades, iterative
methods for finding fixed points of nonexpansive mappings have been studied
extensively. It is worth pointing out two types of such methods. The Mann
iteration is defined by the recursive scheme%
\begin{equation*}
x_{n+1}=(1-\alpha _{n})x_{n}+\alpha _{n}Tx_{n},\ n\in \,\mathbb{N},
\end{equation*}%
where $x_{0}\in C$ and $\alpha _{n}\in \lbrack 0,1].$ The Halpern iteration
is defined by%
\begin{equation*}
x_{n+1}=\alpha _{n}u+(1-\alpha _{n})Tx_{n},\ n\in \,\mathbb{N},
\end{equation*}%
where $x_{0},u\in C$ and $\alpha _{n}\in \lbrack 0,1],\ n\in \,\mathbb{N}$.
Unlike Mann's iteration, a sequence generated by Halpern's scheme is
strongly convergent provided the underlying Banach space is smooth enough
and $(\alpha _{n})$ satisfies some mild conditions. However, in general, the
problem of the convergence of Halpern's iteration is still open even in the
case of uniformly convex spaces. For a deeper discussion of this topic we
refer the reader to \cite{Xu} and the references given there.

New possibilities arises if we consider infinitesimal perturbations of
nonexpansive mappings. Let $T:C\rightarrow C$ be a nonexpansive mapping. By
the transfer principle, we obtain an (intra-nonexpansive) mapping $^{\ast
}T:\,^{\ast }C\rightarrow \,^{\ast }C$ and we can define a nonexpansive
mapping $\widetilde{T}:\widetilde{C}\rightarrow \widetilde{C}$ in the
nonstandard hull of a Banach space $X$ by putting $\widetilde{T}({^{\circ }}%
x)={{}^{\circ }}(^{\ast }Tx)$ for $x\in \,^{\ast }C$. We may regard $C$ as a
subset of $\widetilde{C}$ via the mapping $x\rightarrow $ $^{\circ }(^{\ast
}x)$ and $\widetilde{T}$ as an extension of $T$.

Let $u\in \,^{\ast }C$. Fix a positive infinitesimal $\varepsilon $ and
define%
\begin{equation*}
Sx=(1-\varepsilon )^{\ast }Tx+\varepsilon u,\ x\in \,^{\ast }C.
\end{equation*}%
It is not difficult to check that $S:\,^{\ast }C\rightarrow \,^{\ast }C$ is
an intra-contraction and we can consider for a fixed $x_{0}\in \,^{\ast }C$
a nonstandard Picard iteration%
\begin{equation}
x_{n+1}=S^{n}x_{0},\text{ }n\in \,^{\ast }\mathbb{N}\text{.}  \label{3.1}
\end{equation}%
It follows from Theorem \ref{contr} that the $^{\ast }\mathbb{N}$-sequence $%
(S^{n}x_{0})_{n\in \,^{\ast }\mathbb{N}}$ intra-converges to a point $%
z_{0}\in \,^{\ast }C.$ Notice that $^{\circ }z_{0}\in \widetilde{C}$ is a
fixed point of $\widetilde{T}$ since
\begin{equation*}
\left\Vert ^{\ast }Tz_{0}-z_{0}\right\Vert \leq \varepsilon \simeq 0.
\end{equation*}%
Denote by $P_{C}:\widetilde{X}\rightarrow C$ a metric projection onto $C$:%
\begin{equation*}
P_{C}x=\left\{ y\in C:\left\Vert x-y\right\Vert =\inf_{z\in C}\left\Vert
x-z\right\Vert \right\} .
\end{equation*}%
It is well known that in uniformly convex spaces $P_{C}x$ is a singleton for
every $x\in \widetilde{X}$. Furthermore%
\begin{equation*}
\left\Vert \widetilde{T}P_{C}\,^{\circ }z_{0}-\,^{\circ }z_{0}\right\Vert
=\left\Vert \widetilde{T}P_{C}\,^{\circ }z_{0}-\widetilde{T}\,^{\circ
}z_{0}\right\Vert \leq \left\Vert P_{C}\,^{\circ }z_{0}-\,^{\circ
}z_{0}\right\Vert =\inf_{z\in C}\left\Vert \,^{\circ }z_{0}-z\right\Vert .
\end{equation*}%
But $\widetilde{T}P_{C}\,^{\circ }z_{0}\in C$ and hence $\widetilde{T}%
P_{C}\,^{\circ }z_{0}=P_{C}\,^{\circ }z_{0}$, i.e., $P_{C}\,^{\circ }z_{0}$
is a fixed point of $T$. In this way, we obtain the following theorem.

\begin{theorem}
Let $C$ be a nonempty bounded closed and convex subset of a uniformly convex
Banach space $X$ and $T:C\rightarrow C$ a nonexpansive mapping. Then the
nonstandard Picard iteration given by (\ref{3.1}) intra-converges to a point
$z_{0}\in \,^{\ast }C$. Furthermore, $P_{C}\,^{\circ }z_{0}$ is a fixed
point of $T.$
\end{theorem}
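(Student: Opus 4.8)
The plan is to follow the construction already prepared above: turn the nonexpansive map into a genuine intra-contraction by an infinitesimal Halpern-type perturbation, apply the internal Banach Contraction Principle (Theorem \ref{contr}), and then transport the resulting internal fixed point to the nonstandard hull, where uniform convexity lets the metric projection recover an honest fixed point of $T$.

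First I would check that $Sx=(1-\varepsilon )^{\ast }Tx+\varepsilon u$ is an intra-contraction on $^{\ast }C$. By transfer, $^{\ast }T$ is intra-nonexpansive, so for $x,y\in \,^{\ast }C$ one has $\Vert Sx-Sy\Vert =(1-\varepsilon )\Vert \,^{\ast }Tx-\,^{\ast }Ty\Vert \leq (1-\varepsilon )\Vert x-y\Vert$, with internal ratio $1-\varepsilon \in \,^{\ast }(0,1)$. Theorem \ref{contr} then supplies a unique internal fixed point $z_{0}=Sz_{0}$ and guarantees that, for the fixed starting point $x_{0}$, the $^{\ast }\mathbb{N}$-sequence $(S^{n}x_{0})$ intra-converges to $z_{0}$; this is the first assertion of the theorem.

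Next I would verify that the shadow $^{\circ }z_{0}$ is fixed by $\widetilde{T}$. From $z_{0}=Sz_{0}$ one gets $z_{0}-\,^{\ast }Tz_{0}=\varepsilon (u-\,^{\ast }Tz_{0})$, and since $C$ is bounded the elements $u$ and $^{\ast }Tz_{0}$ lie in $\gal(^{\ast }X)$, so $\Vert u-\,^{\ast }Tz_{0}\Vert$ is a bounded element of $^{\ast }\mathbb{R}$. Hence $\Vert \,^{\ast }Tz_{0}-z_{0}\Vert =\varepsilon \Vert u-\,^{\ast }Tz_{0}\Vert \simeq 0$, so $^{\ast }Tz_{0}$ and $z_{0}$ have the same shadow and $\widetilde{T}(\,^{\circ }z_{0})={}^{\circ }(^{\ast }Tz_{0})={}^{\circ }z_{0}$.

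The decisive step is the projection argument. In a uniformly convex space the metric projection $P_{C}$ is single-valued, so $P_{C}\,^{\circ }z_{0}$ is the unique nearest point of $C$ to $^{\circ }z_{0}$. Combining the nonexpansiveness of $\widetilde{T}$ with $\widetilde{T}\,^{\circ }z_{0}={}^{\circ }z_{0}$ gives $\Vert \widetilde{T}P_{C}\,^{\circ }z_{0}-\,^{\circ }z_{0}\Vert \leq \Vert P_{C}\,^{\circ }z_{0}-\,^{\circ }z_{0}\Vert =\inf_{z\in C}\Vert \,^{\circ }z_{0}-z\Vert$. Since $\widetilde{T}$ extends $T$ and maps $C$ into $C$, the point $\widetilde{T}P_{C}\,^{\circ }z_{0}$ lies in $C$ and realizes the distance from $^{\circ }z_{0}$ to $C$; uniqueness of the nearest point then forces $\widetilde{T}P_{C}\,^{\circ }z_{0}=P_{C}\,^{\circ }z_{0}$, i.e.\ $P_{C}\,^{\circ }z_{0}$ is a fixed point of $T$. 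I expect the only genuine bookkeeping to be in checking that $\widetilde{T}$ is well defined on $\widetilde{C}$ (independently of the chosen representative) and genuinely extends $T$ on $C$; once that is secured, single-valuedness of $P_{C}$ in the uniformly convex setting closes the argument.
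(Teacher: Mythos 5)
Your proposal is correct and follows essentially the same route as the paper: verify that $S$ is an intra-contraction, invoke Theorem \ref{contr} for the intra-convergence to $z_{0}$, observe that $\Vert \,^{\ast }Tz_{0}-z_{0}\Vert \simeq 0$ so that $^{\circ }z_{0}$ is fixed by $\widetilde{T}$, and then use single-valuedness of the metric projection in a uniformly convex space together with nonexpansiveness to conclude that $\widetilde{T}P_{C}\,^{\circ }z_{0}=P_{C}\,^{\circ }z_{0}$. Your justification of the infinitesimal estimate via $z_{0}-\,^{\ast }Tz_{0}=\varepsilon (u-\,^{\ast }Tz_{0})$ with $\Vert u-\,^{\ast }Tz_{0}\Vert$ bounded is in fact slightly more explicit than the paper's one-line version of the same step.
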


A natural question arises whether the projection $P_{C}$ is at all
necessary, i.e., whether $^{\circ }z_{0}\in C$ if $u,x_{0}\in C$. An
affirmative answer to this question should result in the study of Halpern's
iteration.

\section{Fixed points of direct sums}

Recall that a Banach space $X$ is said to have the fixed point property
(FPP) if every nonexpansive self-mapping defined on a nonempty bounded
closed and convex set $C\subset X$ has a fixed point. A Banach space $X$ is
said to have the weak fixed point property (WFPP) if the additional
assumption is added that $C$ is weakly compact.

The problem of whether FPP or WFPP is preserved under direct sum of Banach
spaces is an old one. In 1968, L. P. Belluce, W. A. Kirk and E. F. Steiner
\cite{BKS} proved that the direct sum of two Banach spaces with normal
structure, endowed with the \textquotedblleft maximum\textquotedblright\
norm, also has normal structure. Since then, the preservation of normal
structure and conditions which guarantee normal structure have been studied
extensively and the problem is now quite well understood (see \cite{DhSa}
for a survey). But the situation is much more difficult if at least one of
these spaces lacks weak normal structure. We note here the results of S.
Dhompongsa, A. Kaewcharoen and A. Kaewkhao \cite{DKK}, and M. Kato and T.
Tamura (see \cite{KaTa1, KaTa2}).

Recently, a few general fixed point theorems in direct sums were proved in
\cite{Wi1, Wi2} (see also \cite{PrWi, Wi}). Although their proofs were
formulated in standard terms, the original ideas came from nonstandard
analysis. In this section we present the original proof of the main result
in \cite{Wi1} which is, in our opinion, more insightful than its classical
translation.

Let us first recall terminology concerning direct sums. A norm $\left\Vert
\cdot \right\Vert $ on $\mathbb{R}^{2}$ is said to be monotone if
\begin{equation*}
\Vert (x_{1},y_{1})\Vert \leq \Vert (x_{2},y_{2})\Vert \ \ \ \text{whenever}%
\ \ 0\leq x_{1}\leq x_{2},0\leq y_{1}\leq y_{2}.
\end{equation*}%
A norm $\left\Vert \cdot \right\Vert $ is said to be strictly monotone if
\begin{align*}
\ \Vert (x_{1},y_{1})\Vert <\Vert (x_{2},y_{2})\Vert \ \ \ \text{whenever}\
\ & 0\leq x_{1}\leq x_{2},0\leq y_{1}<y_{2}\ \  \\
\text{or}\ \ & 0\leq x_{1}<x_{2},0\leq y_{1}\leq y_{2}.
\end{align*}%
It is easy to see that $\ell _{p}^{2}$-norms, $1\leq p<\infty ,$ are
strictly monotone. We will assume that the norm is normalized, i.e.,
\begin{equation*}
\Vert (1,0)\Vert =...=\Vert (0,1)\Vert =1.
\end{equation*}%
F. F. Bonsall and J. Duncan \cite{BoDu} showed that the set of all monotone
and normalized norms on $\mathbb{R}^{2}$ is in one-to-one correspondence
with the set $\Psi $ of all continuous convex functions on $[0,1]$
satisfying $\psi (0)=\psi (1)=1$ and $\max \{1-t,t\}\leq \psi (t)\leq 1$ for
$0\leq t\leq 1,$ where the correspondence is given by
\begin{equation}
\psi (t)=\left\Vert (1-t,t)\right\Vert ,\ 0\leq t\leq 1.  \label{n1}
\end{equation}%
Conversely, for any $\psi \in \Psi $ define%
\begin{equation*}
\Vert (x_{1},x_{2})\Vert _{\psi }=(\left\vert x_{1}\right\vert +\left\vert
x_{2}\right\vert )\psi (\left\vert x_{2}\right\vert /\left\vert
x_{1}\right\vert +\left\vert x_{2}\right\vert )
\end{equation*}%
for $(x_{1},x_{2})\neq (0,0)$ and $\Vert (0,0)\Vert _{\psi }=0.$ Then $\Vert
\cdot \Vert _{\psi }$ is an absolute and normalized norm which satisfies (%
\ref{n1}). It was proved in \cite[Corollary 3]{TaKaSa2} that a norm $\Vert
\cdot \Vert _{\psi }$ in $\mathbb{R}^{2}$ is normalized and strictly
monotone iff
\begin{equation*}
\psi (t)>\psi _{\infty }(t)
\end{equation*}%
for all $0<t<1.$ Let $X,Y$ be Banach spaces and $\psi \in \Psi $. We shall
write $X\oplus _{\psi }Y$ for the $\psi $-direct sum of $X,Y$ with the norm $%
\Vert (x,y)\Vert _{\psi }=\Vert (\Vert x\Vert ,\Vert y\Vert )\Vert _{\psi }$%
, where $(x,y)\in X\times Y$.

A Banach space $X$ is said to have the generalized Gossez-Lami Dozo property
(GGLD, in short) if
\begin{equation*}
\limsup_{m\rightarrow \infty }\limsup_{n\rightarrow \infty }\Vert
x_{n}-x_{m}\Vert >1
\end{equation*}%
whenever $(x_{n})$ converges weakly to $0$ and $\lim_{n\rightarrow \infty
}\Vert x_{n}\Vert =1.$ It is known that the GGLD property is weaker than
weak uniform normal structure (see, e.g., \cite{SiSm}).

The following lemma was proved in \cite[Lemma 4]{PrWi} (see also \cite{GaLl,
SiSm}).

\begin{lemma}
\label{y=0} Let $X\oplus _{\psi }Y$ be a $\psi $-direct sum of Banach spaces
$X$, $Y$ with a strictly monotone norm. Assume that $Y$ has the GGLD
property, the vectors $w_{n}=(x_{n},y_{n})\in X\oplus _{\psi }Y$ tend weakly
to 0 and
\begin{equation*}
\lim_{n,m\rightarrow \infty ,n\neq m}\Vert w_{n}-w_{m}\Vert _{\psi
}=\lim_{n\rightarrow \infty }\Vert w_{n}\Vert _{\psi }.
\end{equation*}%
Then $\lim_{n\rightarrow \infty }\Vert y_{n}\Vert =0$.
\end{lemma}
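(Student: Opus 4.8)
The plan is to argue by contradiction, reducing the statement to an incompatibility between the GGLD property of $Y$ and the strict monotonicity of the $\psi$-norm. Suppose $\lim_{n}\Vert y_{n}\Vert \neq 0$. Then, passing to a subsequence, I may assume that $\Vert y_{n}\Vert \to b$ for some $b>0$ and $\Vert x_{n}\Vert \to a\geq 0$. By a further standard subsequence extraction (a Ramsey-type/diagonal argument, or equivalently by passing to $\mathcal{U}$-limits along a free ultrafilter) I arrange that the double limits $\alpha =\lim_{n,m}\Vert x_{n}-x_{m}\Vert$ and $\beta =\lim_{n,m}\Vert y_{n}-y_{m}\Vert$ both exist; this is harmless, since the hypothesis is inherited by subsequences.

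Next I would record two lower bounds on $\alpha$ and $\beta$. Since $w_{n}\to 0$ weakly, both $x_{n}\to 0$ weakly in $X$ and $y_{n}\to 0$ weakly in $Y$. Fixing $m$ and letting $n\to \infty$, the vectors $x_{n}-x_{m}$ tend weakly to $-x_{m}$, so weak lower semicontinuity of the norm gives $\liminf_{n}\Vert x_{n}-x_{m}\Vert \geq \Vert x_{m}\Vert$; letting $m\to\infty$ yields $\alpha \geq a$. The crucial strict inequality $\beta>b$ comes from GGLD: because $\Vert y_{n}\Vert \to b>0$, the normalized vectors $u_{n}=y_{n}/\Vert y_{n}\Vert$ satisfy $\Vert u_{n}\Vert =1$ and $u_{n}\to 0$ weakly, so the GGLD property of $Y$ forces $\limsup_{m}\limsup_{n}\Vert u_{n}-u_{m}\Vert >1$. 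Since $u_{n}-u_{m}=\Vert y_{n}\Vert^{-1}(y_{n}-y_{m})+(\Vert y_{n}\Vert^{-1}-\Vert y_{m}\Vert^{-1})y_{m}$ and the second summand tends to $0$ (as $\Vert y_{m}\Vert\to b$), the iterated limit on the left equals $\beta /b$; hence $\beta/b>1$, that is, $\beta>b$.

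Finally I would feed these bounds into the norm hypothesis. Passing to the limit in $\Vert w_{n}-w_{m}\Vert_{\psi}=\Vert (\Vert x_{n}-x_{m}\Vert ,\Vert y_{n}-y_{m}\Vert)\Vert_{\psi}$ and in $\Vert w_{n}\Vert_{\psi}=\Vert(\Vert x_{n}\Vert,\Vert y_{n}\Vert)\Vert_{\psi}$, the equality of double limits becomes $\Vert(\alpha,\beta)\Vert_{\psi}=\Vert(a,b)\Vert_{\psi}$. But $0\leq a\leq \alpha$ and $0\leq b<\beta$, so strict monotonicity of the $\psi$-norm gives $\Vert(a,b)\Vert_{\psi}<\Vert(\alpha,\beta)\Vert_{\psi}$, a contradiction. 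Hence $b=0$ and $\lim_{n}\Vert y_{n}\Vert=0$.

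I expect the main obstacle to be the bookkeeping that reconciles the two notions of limit in play: the hypothesis is phrased with a genuine double limit over pairs $n\neq m$, whereas GGLD is phrased with an iterated $\limsup$. Ensuring that, after the subsequence extraction, the double limit of $\Vert y_{n}-y_{m}\Vert$ exists and therefore coincides with the iterated limsup (so that GGLD can be cashed in as the sharp inequality $\beta>b$) is the one delicate point; everything else is weak lower semicontinuity together with a direct appeal to strict monotonicity.
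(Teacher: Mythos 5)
Your proof is correct: the subsequence extraction (Ramsey/ultrafilter) to make the double limits $\alpha,\beta$ exist, the weak lower semicontinuity bound $\alpha\geq a$, the GGLD normalization argument giving $\beta>b$, and the final appeal to strict monotonicity of $\Vert\cdot\Vert_{\psi}$ all hold up, including the delicate point that an existing double limit coincides with the iterated $\limsup$ required by GGLD. Note that the paper itself does not prove this lemma but merely cites \cite[Lemma 4]{PrWi}; your argument is essentially the standard one given in that reference, so this is a faithful reconstruction rather than a new route.
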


We are now in a position to give a nonstandard proof of the following
theorem.

\begin{theorem}[\protect\cite{Wi1}]
\label{Th1}Let $X$ be a Banach space with WFPP and suppose $Y$ has the GGLD
property. Then $X\oplus _{\psi }Y$ with a strictly monotone norm has WFPP.
\end{theorem}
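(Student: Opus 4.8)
The plan is to establish the weak fixed point property for $X\oplus_\psi Y$ by the classical reduction to a minimal invariant set, invoking the hypotheses on $X$ and $Y$ only at the end, through Lemma~\ref{y=0} and the WFPP of $X$. Let $C\subset X\oplus_\psi Y$ be a nonempty weakly compact convex set and $T\colon C\to C$ nonexpansive. By Zorn's lemma choose a minimal $T$-invariant weakly compact convex set $K\subset C$. If $\diam K=0$ then $K$ is a fixed point and we are done, so I would suppose $\diam K=1$ after rescaling and argue for a contradiction. The first step is to produce an approximate fixed point sequence $(w_n)$ in $K$, $w_n=(x_n,y_n)$, with $\|w_n-Tw_n\|_\psi\to 0$; such sequences always exist for nonexpansive self-maps of bounded closed convex sets.

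Next I would arrange the sequence so that Lemma~\ref{y=0} applies. Using weak compactness, pass to a subsequence with $w_n$ converging weakly to some $w_0\in K$, and translate so that $w_0=0$. The Goebel--Karlovitz lemma for the minimal set $K$ gives $\lim_n\|w_n-x\|_\psi=\diam K=1$ for every $x\in K$; in particular $\lim_n\|w_n\|_\psi=1$, and by refining $(w_n)$ to a diametral sequence one also obtains $\lim_{n,m\to\infty,\,n\neq m}\|w_n-w_m\|_\psi=1$. Thus the two limits in Lemma~\ref{y=0} coincide, and since $Y$ has the GGLD property and the norm is strictly monotone, that lemma yields $\lim_n\|y_n\|=0$. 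Consequently the $X$-components satisfy $x_n\rightharpoonup 0$, $\|x_n\|\to 1$ and $\|x_n-x_m\|\to 1$, so the asymptotic behaviour of $(w_n)$ is concentrated in the $X$-fibre over $0$.

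The nonstandard machinery of Section~2 then converts this concentration into an honest fixed point. The internal hyper-extension $({}^\ast w_n)_{n\in{}^\ast\mathbb{N}}$ weakly intra-converges to $0$, and since ${}^\ast K$ is intra-convex and $\ast$-relatively weakly compact, $\conv_{\text{int}}(\{{}^\ast w_n\})\subset{}^\ast K$ is $\ast$-relatively weakly compact too; Lemma~\ref{Lem1} then places $0$ in ${}^{\circ}\conv_{\text{int}}(\{{}^\ast w_n\})$. For an infinite index $N$ we have $\|{}^\ast y_N\|\simeq 0$ and $\|{}^\ast T\,{}^\ast w_N-{}^\ast w_N\|\simeq 0$, so in the nonstandard hull $\widetilde{X\oplus_\psi Y}\cong\widetilde X\oplus_\psi\widetilde Y$ the shadow ${}^\circ({}^\ast w_N)$ is a fixed point of $\widetilde T$ lying in the fibre $\widetilde X\times\{0\}$. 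Passing through the $X$-projection, this should produce a nonexpansive self-map of a weakly compact convex subset of $X$, and invoking the WFPP of $X$ would yield a genuine fixed point of $T$ in $C$, contradicting $\diam K>0$.

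The main obstacle is the descent in the last paragraph. The fixed point of $\widetilde T$ produced in the hull lives a priori in $\widetilde X$, not in $X$, and the nonstandard hull of a WFPP space need not itself have WFPP; the delicate point is therefore to realise the reduced problem \emph{genuinely inside} $X$, exhibiting a nonexpansive self-map of a weakly compact convex subset of $X$ whose fixed point lifts back to a fixed point of $T$, rather than only in $\widetilde X$. A second, technical difficulty is ensuring that the diametral structure needed for Lemma~\ref{y=0} survives the passage to ${}^\ast X$, so that the intra-convex hull together with the $\ast$-relative compactness of $\conv_{\text{int}}$ can do the work of extracting the fixed point; getting these two ingredients to interact correctly is the crux of the argument.
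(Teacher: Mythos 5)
Your Step 1 coincides with the paper's Step 1 (minimal invariant $K$, approximate fixed point sequence, Goebel--Karlovitz, Lemma \ref{y=0} to get $\lim_n\Vert y_n\Vert=0$), and the tools you name are the right ones. But the proposal is not a proof: it stops exactly where the theorem's content begins, at the point you yourself flag as ``the main obstacle.'' Producing the shadow $^{\circ}w_N$ for an infinite index $N$ as a fixed point of $\widetilde{T}$ lying in $\widetilde{X}\times\{0\}$ is essentially free --- any approximate fixed point sequence yields fixed points of $\widetilde{T}$ in the hull --- and by itself it yields nothing, since, as you correctly observe, $\widetilde{X}$ need not inherit WFPP from $X$, and a single nonstandard point admits no ``descent'' to $X$: there is no projection or limiting procedure that turns one fixed point of $\widetilde{T}$ into a fixed point of $T$. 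So the gap is genuine, and no mechanism to close it is proposed.

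What the paper does instead is replace the single fixed point by a nonempty, closed, convex, $\widetilde{T}$-invariant set of \emph{standard} points with vanishing $Y$-component; WFPP of $X$ is then applied to $T$ itself, not to $\widetilde{T}$. Concretely (Steps 2--4): working with the transferred modulus $\delta(\cdot)$ of uniform monotonicity of $\Vert\cdot\Vert_{\psi}$ (strict monotonicity equals uniform monotonicity in dimension $2$) and carefully calibrated infinitesimals $\varepsilon_i$, one builds by internal induction an internal subsequence $(v_i)$ of $(w_n)_{n\in{}^{\ast}\mathbb{N}}$ and an internal family $D_j^{i}\subset{}^{\ast}K$ of $\ast$-relatively compact intra-convex sets, with $D_1^{i}=\conv_{\text{int}}(D_1^{i-1}\cup\{v_i\})$ and $D_{j+1}^{i}=\conv_{\text{int}}(D_j^{i}\cup{}^{\ast}T(D_j^{i}))$; a quantitative perturbation estimate (Step 3) then propagates ``infinitesimal $Y$-component'' from the sequence $(v_i)$ to \emph{every} point of every $D_j^{i}$ (Step 4). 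This is precisely the ``second technical difficulty'' you gesture at, and it is where the strict monotonicity hypothesis is used a second time, beyond Lemma \ref{y=0}. Finally (Step 5), $D=\cl(\bigcup_{j}{}^{\circ}D_j)$ is a closed convex $\widetilde{T}$-invariant subset of $\widetilde{K}$; Lemma \ref{Lem1} --- which you invoke but never actually use --- shows $^{\circ}(0,0)\in D$, and the decisive move missing from your argument is the intersection $M=D\cap\{^{\circ}(^{\ast}x):x\in K\}$ with the canonical copy of $K$ inside $\widetilde{K}$. This $M$ is nonempty (it contains $^{\circ}(0,0)$, since $(0,0)\in K$), closed, convex and $\widetilde{T}$-invariant, and by Step 4 it lies in $\{^{\circ}(^{\ast}x):x\in X\}\times\{0\}$; hence it is isometric to a weakly compact convex subset of $X$ on which $\widetilde{T}$ acts as $T$. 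That is the descent: WFPP of $X$ now gives a fixed point of $T$ in $K$, the desired contradiction. Without the invariant-set construction and this intersection with the standard copy of $K$, your outline cannot be completed.
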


\begin{proof}
The proof will be divided into 5 steps.\smallskip

Step 1. We follow the classical arguments in metric fixed point theory.
Assume that $X\oplus _{\psi }Y$ does not have WFPP. Then, there exist a
weakly compact convex subset $C$ of $X\oplus _{\psi }Y$ and a nonexpansive
mapping $T:C\rightarrow C$ without a fixed point. By the Kuratowski-Zorn
lemma, there exists a convex and weakly compact set $K\subset C$ which is
minimal invariant under $T$ and which is not a singleton. Let $%
(w_{n})=((x_{n}^{\prime },y_{n}^{\prime }))$ be an approximate fixed point
sequence for $T$ in $K$, i.e., $\lim_{n\rightarrow \infty }\Vert
Tw_{n}-w_{n}\Vert _{\psi }=0$. Without loss of generality we can assume that
$\diam K=1$, $(w_{n})$ converges weakly to $(0,0)\in K$ and the double limit
$\lim_{n,m\rightarrow \infty ,n\neq m}\Vert w_{n}-w_{m}\Vert _{\psi }$
exists. It follows from the Goebel-Karlovitz lemma (see \cite{Go, Ka}) that
\begin{equation}
\lim_{n,m\rightarrow \infty ,n\neq m}\Vert w_{n}-w_{m}\Vert _{\psi
}=1=\lim_{n\rightarrow \infty }\Vert w_{n}-w\Vert _{\psi }  \label{dseq}
\end{equation}%
for every $w\in K.$ Hence $\lim_{n\rightarrow \infty }\Vert y_{n}^{\prime
}\Vert =0$ by Lemma \ref{y=0}.\smallskip

Step 2. Let $(w_{n})_{n\in \,^{\ast }\mathbb{N}}$ be a hyper-extension of
the sequence $(w_{n})_{n\in \mathbb{N}}.$ Since $(\mathbb{R}^{2},\left\Vert
\cdot \right\Vert _{\psi })$ is a finite dimensional space, the norm $%
\left\Vert \cdot \right\Vert _{\psi }$ is strictly monotone iff it is
uniformly monotone. It follows, using transfer, that for every $\varepsilon
\in \,^{\ast }\mathbb{R}_{+},$ there exists $\delta (\varepsilon )\in
\,^{\ast }\mathbb{R}_{+}$ such that if $(\bar{a},\bar{b}),(\bar{a},\bar{c})$
belong to $^{\ast }B_{(\mathbb{R}^{2},\left\Vert \cdot \right\Vert _{\psi
})} $ and $\left\Vert (\bar{a},\bar{b})\right\Vert _{\psi }<\left\Vert (\bar{%
a},\bar{c})\right\Vert _{\psi }+\delta (\varepsilon ),$ then $\left\Vert
\bar{b}\right\Vert <\left\Vert \bar{c}\right\Vert +\varepsilon .$ Fix an
unbounded $\omega \in \,^{\ast }\mathbb{N}$ and put $\eta =\frac{1}{\omega }%
\simeq 0.$ Let
\begin{equation*}
\varepsilon _{i}=\min \{\eta \delta (\eta ^{i})/3,\eta ^{i+1}\},\ \ i\in
\,^{\ast }\mathbb{N}.
\end{equation*}%
By transfer, $\Vert ^{\ast }Tw_{n}-w_{n}\Vert _{\psi }$ and $\Vert
y_{n}^{\prime }\Vert $ intra-converge to $0$ and hence we can fix $%
v_{0}=w_{n_{0}}=(x_{0},y_{0})$ such that $\Vert ^{\ast }Tv_{0}-v_{0}\Vert
_{\psi }<\varepsilon _{0}$ and $\Vert y_{0}\Vert <\varepsilon _{0}.$ For
hypernatural numbers $1\leq j\leq \omega ,$ write $D_{j}^{0}=\left\{
v_{0}\right\} .$ We shall define an internal $^{\ast }\mathbb{N}$%
-subsequence $(v_{n})_{n\in \,^{\ast }\mathbb{N}}$ of $(w_{n})_{n\in
\,^{\ast }\mathbb{N}}$ and an internal family $\left\{ D_{j}^{i}\right\}
_{1\leq j\leq \omega ,i\in \,^{\ast }\mathbb{N}}$ of $\ast $-relatively
compact subsets of $^{\ast }K$ by internal induction. Choose $%
v_{1}=w_{n_{1}}=(x_{1},y_{1})$, $n_{0}<n_{1}\in $ $^{\ast }\mathbb{N}$ in
such a way that $\Vert ^{\ast }Tv_{1}-v_{1}\Vert _{\psi }<\varepsilon _{1}$,
$\Vert y_{1}\Vert <\varepsilon _{1}$ and $\Vert v_{1}-v_{0}\Vert _{\psi
}>1-\varepsilon _{1}$ (notice that, by transfer of (\ref{dseq}), $\Vert
w_{n}-v_{0}\Vert $ intra-converges to $1$). Let us put
\begin{equation*}
D_{1}^{1}=\conv_{\text{int}}\left\{ v_{0},v_{1}\right\}
\end{equation*}%
and
\begin{equation*}
D_{j+1}^{1}=\conv_{\text{int}}(D_{j}^{1}\cup \,^{\ast }T(D_{j}^{1}))
\end{equation*}%
for $1\leq j<\omega $. By internal induction, $\left\{
D_{1}^{1},...,D_{\omega }^{1}\right\} $ is a well-defined internal family of
$\ast $-relatively compact subsets of $^{\ast }K$ with $D_{1}^{1}\subset
...\subset D_{\omega }^{1}.$

Now suppose that we have chosen an internal $k$-tuple $n_{1}<...<n_{k}$ $(k$
$\in \,^{\ast }\mathbb{N\smallsetminus }\{0\},n_{1}>n_{0}),$ $%
v_{i}=w_{n_{i}}=(x_{i},y_{i}),0\leq i\leq k,$ and internal $k$-tuple $%
(\left\{ D_{1}^{i},...,D_{\omega }^{i}\right\} )_{1\leq i\leq k}$ of subsets
of $^{\ast }K$ such that for each $i\in \left\{ 1,...,k\right\} $ and $j\in
\{1,...,\omega -1\}:$

\begin{enumerate}
\item[(i)] $\Vert ^{\ast }Tv_{i}-v_{i}\Vert _{\psi }<\varepsilon _{i},$

\item[(ii)] $\Vert y_{i}\Vert <\varepsilon _{i},$

\item[(iii)] $\Vert v_{i}-v\Vert _{\psi }>1-\varepsilon _{i}$ for all $v\in
D_{\omega }^{i-1},$

\item[(iv)] $D_{1}^{i}=\conv_{\text{int}}(D_{1}^{i-1}\cup \left\{
v_{i}\right\} ),$

\item[(v)] $D_{j+1}^{i}=\conv_{\text{int}}(D_{j}^{i}\cup \,^{\ast
}T(D_{j}^{i})).$
\end{enumerate}

Then, there exist (internally chosen) $n_{k+1}>n_{k}$, $%
v_{k+1}=w_{n_{k+1}}=(x_{k+1},y_{k+1})$ such that $\Vert ^{\ast
}Tv_{k+1}-v_{k+1}\Vert <\varepsilon _{k+1}$, $\Vert y_{k+1}\Vert
<\varepsilon _{k+1}$ and $\Vert v_{k+1}-v\Vert >1-\varepsilon _{k+1}$ for
all $v\in D_{\omega }^{k}$ (the last inequality follows from the $\ast $%
-relative compactness of $D_{\omega }^{k}$). Let us put
\begin{equation*}
D_{1}^{k+1}=\conv_{\text{int}}(D_{1}^{k}\cup \left\{ v_{k+1}\right\} )
\end{equation*}%
and
\begin{equation*}
D_{j+1}^{k+1}=\conv_{\text{int}}(D_{j}^{k+1}\cup \,^{\ast }T(D_{j}^{k+1}))
\end{equation*}%
for $1\leq j<\omega $. Then, by internal induction on $j$, $\left\{
D_{1}^{k+1},...,D_{\omega }^{k+1}\right\} $ is a well-defined internal
family of $\ast $-relatively compact subsets of $^{\ast }K.$ Hence, by
internal induction on $i$, we obtain an internal sequence $(v_{n})_{n\in
\,^{\ast }\mathbb{N}}$ and an internal family of sets $\left\{
D_{j}^{i}\right\} _{1\leq j\leq \omega ,i\in \,^{\ast }\mathbb{N}}$ such
that (i)-(v) are satisfied for every $j\in \{1,...,\omega -1\}$ and $i\in
\,^{\ast }\mathbb{N\smallsetminus }\{0\}.$\smallskip

Step 3. We claim that for every $1\leq j\leq \omega $, $i\in \,^{\ast }%
\mathbb{N\smallsetminus }\{0\}$ and $u\in D_{j}^{i+1}$ there exists $v\in
D_{j}^{i}$ such that
\begin{equation}
\left\Vert v-u\right\Vert _{\psi }+\left\Vert u-v_{i+1}\right\Vert _{\psi
}\leq \left\Vert v-v_{i+1}\right\Vert _{\psi }+3(j-1)\varepsilon _{i+1}.
\label{step3}
\end{equation}%
Fix $i\in \,^{\ast }\mathbb{N\smallsetminus }\{0\}.$ We shall proceed by
internal induction with respect to $j$. For $j=1$ and $u\in D_{1}^{i+1}=\conv%
_{\text{int }}(D_{1}^{i}\cup \left\{ v_{i+1}\right\} )$ there exists $v\in
D_{1}^{i}$ such that
\begin{equation*}
\left\Vert v-u\right\Vert _{\psi }+\left\Vert u-v_{i+1}\right\Vert _{\psi
}=\left\Vert v-v_{i+1}\right\Vert _{\psi }.
\end{equation*}

Fix $1\leq j<\eta $ and suppose that for every $u\in D_{j}^{i+1}$ there
exists $v\in D_{j}^{i}$ such that (\ref{step3}) is satisfied. Let $u\in
D_{j+1}^{i+1}=$ $\conv_{\text{int}}(D_{j}^{i+1}\cup ~^{\ast
}T(D_{j}^{i+1})). $ The inductive step is obvious if $u\in D_{j}^{i+1}$ so
take $u\in \,^{\ast }T(D_{j}^{i+1})$. Then $u=\,^{\ast }T\bar{u}$ for some $%
\bar{u}\in D_{j}^{i+1}$ and, by assumption, there exists $\bar{v}\in
D_{j}^{i}$ such that
\begin{equation*}
\left\Vert \bar{v}-\bar{u}\right\Vert _{\psi }+\left\Vert \bar{u}%
-v_{i+1}\right\Vert _{\psi }\leq \left\Vert \bar{v}-v_{i+1}\right\Vert
_{\psi }+3(j-1)\varepsilon _{i+1}.
\end{equation*}%
Let $v=~^{\ast }T\bar{v}\in D_{j+1}^{i}$. Then
\begin{align}
& \left\Vert v-u\right\Vert _{\psi }+\left\Vert u-v_{i+1}\right\Vert _{\psi
}\leq \left\Vert \bar{v}-\bar{u}\right\Vert _{\psi }+\left\Vert \bar{u}%
-v_{i+1}\right\Vert _{\psi }+\left\Vert ^{\ast }Tv_{i+1}-v_{i+1}\right\Vert
_{\psi }  \notag \\
& \leq \left\Vert \bar{v}-v_{i+1}\right\Vert _{\psi }+(3j-2)\varepsilon
_{i+1}<\left\Vert v-v_{i+1}\right\Vert _{\psi }+(3j-1)\varepsilon _{i+1},
\label{step3b}
\end{align}%
since $\left\Vert ^{\ast }Tv_{i+1}-v_{i+1}\right\Vert _{\psi }<\varepsilon
_{i+1}$ and $\left\Vert v-v_{i+1}\right\Vert _{\psi }>1-\varepsilon
_{i+1}\geq \left\Vert \bar{v}-v_{i+1}\right\Vert _{\psi }-\varepsilon
_{i+1}. $

Now let $u=\sum\nolimits_{s=1}^{t}\lambda _{s}u_{s}$ for some $u_{s}\in
D_{j}^{i+1}\cup ~^{\ast }T(D_{j}^{i+1}),\lambda _{s}\in \,^{\ast }\left[ 0,1%
\right] ,1\leq s\leq t\in \,^{\ast }\mathbb{N}$, $\sum\nolimits_{s=1}^{t}%
\lambda _{s}=1.$ Then, by (\ref{step3}) and (\ref{step3b}), there exist $%
\bar{v}_{1},...,\bar{v}_{t}\in D_{j+1}^{i}$ such that%
\begin{equation*}
\left\Vert \bar{v}_{s}-u_{s}\right\Vert _{\psi }+\left\Vert
u_{s}-v_{i+1}\right\Vert _{\psi }\leq \left\Vert \bar{v}_{s}-v_{i+1}\right%
\Vert _{\psi }+(3j-1)\varepsilon _{i+1},1\leq s\leq t.
\end{equation*}%
Hence%
\begin{align*}
& \left\Vert \sum\nolimits_{s=1}^{t}\lambda _{s}\bar{v}_{s}-u\right\Vert
_{\psi }+\left\Vert u-v_{i+1}\right\Vert _{\psi }\leq
\sum\nolimits_{s=1}^{t}\lambda _{s}\left\Vert \bar{v}_{s}-v_{i+1}\right\Vert
_{\psi }+(3j-1)\varepsilon _{i+1} \\
& \leq 1+(3j-1)\varepsilon _{i+1}<\left\Vert \sum\nolimits_{s=1}^{t}\lambda
_{s}\bar{v}_{s}-v_{i+1}\right\Vert _{\psi }+3j\varepsilon _{i+1},
\end{align*}%
since, by (iii), $\dist(D_{\omega }^{i},v_{i+1})>1-\varepsilon _{i+1},$ and
the claim is proved.\smallskip

Step 4. Let $1\leq j\leq \omega $, $i\in \,^{\ast }\mathbb{N}$ and $%
u=(a,b)\in D_{j}^{i}.$ We claim that $\left\Vert b\right\Vert $ $\simeq 0.$
Fix $i\geq 2.$ By Step 3, take $v=(x,y)\in D_{j}^{i-1}$ such that
\begin{equation*}
\left\Vert v-u\right\Vert _{\psi }+\left\Vert u-v_{i}\right\Vert _{\psi
}\leq \left\Vert v-v_{i}\right\Vert _{\psi }+3(j-1)\varepsilon
_{i}<\left\Vert v-v_{i}\right\Vert _{\psi }+3\omega \varepsilon _{i}.
\end{equation*}%
Hence
\begin{equation*}
\left\Vert (\left\Vert x-x_{i}\right\Vert ,\left\Vert y-b\right\Vert
+\left\Vert b-y_{i}\right\Vert )\right\Vert _{\psi }<\left\Vert (\left\Vert
x-x_{i}\right\Vert ,\left\Vert y-y_{i}\right\Vert )\right\Vert _{\psi
}+3\omega \varepsilon _{i}
\end{equation*}%
which yields
\begin{equation*}
\left\Vert y-b\right\Vert +\left\Vert b-y_{i}\right\Vert <\left\Vert
y-y_{i}\right\Vert +\eta ^{i}
\end{equation*}%
since $3\omega \varepsilon _{i}\leq \delta (\eta ^{i}).$ Consequently,
\begin{equation*}
\left\Vert b\right\Vert <\left\Vert y\right\Vert +\left\Vert
y_{i}\right\Vert +\frac{1}{2}\eta ^{i}.
\end{equation*}%
Repeating this procedure $(i-1)$ times we obtain by internal induction an
element $(\bar{x},\bar{y})\in D_{\omega }^{1}$ such that%
\begin{equation*}
\left\Vert b\right\Vert <\left\Vert \bar{y}\right\Vert +\left\Vert
y_{2}\right\Vert +\frac{1}{2}\eta ^{2}+...+\left\Vert y_{i}\right\Vert +%
\frac{1}{2}\eta ^{i}.
\end{equation*}%
Furthermore, it is not difficult to show that $\left\Vert \bar{y}\right\Vert
<\omega \varepsilon _{1}$ (see \cite[Lemma 3.1]{Wi2}). Hence $\left\Vert
b\right\Vert <\omega \varepsilon _{1}+(\varepsilon _{2}+...+\varepsilon
_{i})+\frac{1}{2}(\eta ^{2}+...+\eta ^{i})<\eta +2\eta ^{3}+\eta ^{2}\simeq
0.$\smallskip

Step 5. Let $D_{j}=\bigcup\nolimits_{i\in \,^{\ast }\mathbb{N}}D_{j}^{i}$
for $1\leq j\leq \omega $. Then we can easily prove that $D_{1}\subset
D_{2}\subset ...\subset D_{\omega }$ and $^{\ast }T(D_{j})\subset D_{j+1}$
for $1\leq j<\omega $. Moreover, a sequence $(v_{n})_{n\in ~^{\ast }\mathbb{N%
}}$ intra-converges to $(0,0)$ and hence, by Lemma \ref{Lem1}, $^{\circ
}(0,0)\in ~^{\circ }D_{1}.$ Let%
\begin{equation*}
D=\cl(\bigcup\nolimits_{j\in \mathbb{N\setminus }\left\{ 0\right\}
}{}^{\circ }D_{j}).
\end{equation*}

Notice that $D$ is closed and convex subset of $\widetilde{K}$ which is
invariant under $\widetilde{T}$. Moreover, $^{\circ }(0,0)$ $\in D$ and
consequently the set $M=D\cap \left\{ ^{\circ }(^{\ast }x):x\in K\right\} $
is nonempty, closed, convex and $\widetilde{T}$-invariant. It follows from
Step 4 that $M\subset \left\{ ^{\circ }(^{\ast }x):x\in X\right\} \times
\left\{ 0\right\} $ and therefore $M$ is isometric to a subset of $X$. Since
$X$ has WFPP, $\widetilde{T}$ has a fixed point in $M$, which contradicts
our assumption.
\end{proof}

\appendix

\section{Alpha-Theory}

At present there exist several frameworks for nonstandard analysis. In this
paper we use an axiomatic approach introduced in \cite{BeDi} (see also the
related system $^{\ast }$ZFC \cite{Di1}). This approach is based on the
existence of a new mathematical object $\alpha $ which can be seen as a new
\textquotedblleft ideal\textquotedblright\ number added to $\mathbb{N}$. Our
exposition follows \cite[Sect. 8.3d]{KaRe} (we do not assume the existence
of atoms).

The Alpha-Theory is a theory in the language $\mathcal{L}^{\prime }=\left\{
\in ,J\right\} $ of set theory extended by a new binary relation symbol $J$.
The axioms include all of ZFC minus Regularity, together with the following
five axioms: \smallskip

\begin{enumerate}
\item[\textbf{\ J1}.] $J$ is a function defined on the class of all
sequences of arbitrary sets, i.e.,%
\begin{equation*}
\forall \varphi (%
\Seq%
(\varphi )\Rightarrow \exists !xJ(\varphi ,x))\wedge \forall \varphi \forall
x(J(\varphi ,x)\Rightarrow \Seq(\varphi )),
\end{equation*}%
where $\Seq(\varphi )$ means that $\varphi $ is a sequence, i.e., a function
with the domain $\mathbb{N}$.
\end{enumerate}

Let $J( \varphi ) $ be the unique $x$ which satisfies $J( \varphi ,x) .$

\begin{enumerate}
\item[ \textbf{J2}.] If $f$ is a function defined on a set $A$ and $\varphi
,\psi :\mathbb{N}\rightarrow A$, then $J( \varphi ) =J( \psi ) $ implies $J(
f\circ \varphi ) =J( f\circ \psi ) .$

\item[ \textbf{J3}.] $J( c_{m}) =m$ for any natural $m$, where $c_{m}( n) =m$
for all $n\in \mathbb{N},$

\item[ \ \ ] $J( \text{id}) \notin \mathbb{N}$, where id$( n) =n$ for all $%
n\in \mathbb{N}$.

\item[\textbf{J4}.] If $\vartheta ( n) =\left\{ \varphi ( n) ,\psi ( n)
\right\} $ for all $n\in \mathbb{N}$, then $J( \vartheta ) =\left\{ J(
\varphi ) ,J( \psi ) \right\} .$

\item[\textbf{J5}.] For any $\varphi $, $J(\varphi )=\{J(\psi ):\psi (n)\in
\varphi (n)\}$ for all $n\in \mathbb{N}.$
\end{enumerate}

\smallskip

Let us define $^{\ast }x=J(c_{x})$ for any set $x$ (where $c_{x}(n)=x$ for
all $n\in \mathbb{N}$). Put
\begin{equation*}
\alpha =J(\text{id}).
\end{equation*}%
By the axiom J3, $\alpha \notin \mathbb{N}$ and, by J5, $\alpha \in \,^{\ast
}\mathbb{N}$. It turns out (see \cite[Prop. 2.3]{BeDi}) that if $%
f:A\rightarrow B$, then $^{\ast }f$ is a function from $^{\ast }A$ to $%
^{\ast }B$ and $^{\ast }f(J(\varphi ))=J(f\circ \varphi )$ for any $\varphi :%
\mathbb{N}\rightarrow A$. Taking $\varphi =\ $id and $f=\varphi $ we obtain $%
^{\ast }\varphi (\alpha )=J(\varphi )$ for any sequence $\varphi $. Thus, $J$%
-extensions are simply values of the $\ast $-extended functions at a
\textquotedblleft non-standard natural number\textquotedblright\ $\alpha $.

A set $x$ is said to be internal if there exists a sequence $\varphi $ such
that $x=J( \varphi ) $. Equivalently, $x$ is internal if there exists $y$
such that $x\in $ $^{\ast }y$. A set $x$ is external if it is not internal.

One of the fundamental tools in nonstandard analysis is the transfer
principle which is an application of a famous theorem of \L o\'{s}. Recall
that a formula $\sigma $ is bounded if it is constructed from atomic
formulae using connectives and bounded quantifiers $\forall x\in y$ (i.e., $%
\forall x\ x\in y\Rightarrow ...$), $\exists x\in y$ (i.e., $\exists x\ x\in
y\wedge ...$). The following theorem (see \cite[Th. 6.2]{BeDi}, \cite[Cor.
8.3.13]{KaRe}) shows that the transfer principle is satisfied in
Alpha-Theory.

\begin{theorem}
\label{TransferA}For every bounded formula $\sigma (x_{1},...,x_{k})$ in the
first-order language $\mathcal{L}=\left\{ \in \right\} $ and for any sets $%
a_{1},...,a_{k},$%
\begin{equation*}
\sigma (a_{1},...,a_{k})\Longleftrightarrow \sigma (^{\ast
}a_{1},...,\,^{\ast }a_{k}).
\end{equation*}
\end{theorem}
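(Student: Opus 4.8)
The plan is to derive Theorem~\ref{TransferA} as an instance of {\L}o\'{s}'s theorem for the bounded ultrapower of the universe of sets determined by $J$. The guiding picture is the identity ${}^{\ast}\varphi(\alpha)=J(\varphi)$ recorded above: the map $J$ is evaluation of $\ast$-extended sequences at the ideal index $\alpha$, and $\alpha$ behaves like an ultrafilter limit. Accordingly, the first step is to introduce
\[
\mathcal{U}=\{A\subseteq\mathbb{N}:\alpha\in{}^{\ast}A\}
\]
and to show that $\mathcal{U}$ is a non-principal ultrafilter on $\mathbb{N}$. Since $\alpha\in{}^{\ast}\mathbb{N}$ while $\alpha\notin\mathbb{N}$ by J3, and since J5 gives the set-algebraic identities ${}^{\ast}\emptyset=\emptyset$, ${}^{\ast}A\subseteq{}^{\ast}B$ for $A\subseteq B$, ${}^{\ast}(A\cap B)={}^{\ast}A\cap{}^{\ast}B$ and ${}^{\ast}(\mathbb{N}\setminus A)={}^{\ast}\mathbb{N}\setminus{}^{\ast}A$, these translate into the filter axioms, the ultra-property, and the absence of finite members of $\mathcal{U}$.

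The heart of the matter is the \emph{{\L}o\'{s}-type lemma}: for every bounded formula $\sigma(x_{1},\dots,x_{k})$ and all sequences $\varphi_{1},\dots,\varphi_{k}$,
\[
\sigma(J(\varphi_{1}),\dots,J(\varphi_{k}))\iff\{n\in\mathbb{N}:\sigma(\varphi_{1}(n),\dots,\varphi_{k}(n))\}\in\mathcal{U},
\]
which I would prove by induction on the structure of $\sigma$. The two atomic cases are the foundational lemmas $J(\varphi)=J(\psi)\iff\{n:\varphi(n)=\psi(n)\}\in\mathcal{U}$ and $J(\varphi)\in J(\psi)\iff\{n:\varphi(n)\in\psi(n)\}\in\mathcal{U}$; the equality statement is the fundamental representation property of $J$ and is extracted from J2 and J4 (in tandem with the verification that $\mathcal{U}$ is an ultrafilter), while the membership statement is exactly the content of J5, since $J(\psi)=\{J(\chi):\chi(n)\in\psi(n)\text{ for all }n\}$. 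The propositional connectives are then immediate from the ultrafilter laws: negation uses that exactly one of $A$, $\mathbb{N}\setminus A$ lies in $\mathcal{U}$, and conjunction uses closure under finite intersection.

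The step I expect to be the main obstacle is the inductive case of the bounded quantifiers $\exists x\in x_{j}$ and $\forall x\in x_{j}$, which is where the genuine {\L}o\'{s} argument lives. For the existential case one assumes that the set $E=\{n:\exists x\in\varphi_{j}(n)\ \sigma'(\dots,x,\dots)\}$ belongs to $\mathcal{U}$ and must manufacture a single witnessing sequence $\chi$ with $\chi(n)\in\varphi_{j}(n)$ for all relevant $n$, so that $J(\chi)\in J(\varphi_{j})$ by J5 and the induction hypothesis applies to $\sigma'$ along $E$. Assembling these pointwise witnesses into one sequence requires the axiom of choice, and it is precisely the boundedness of $\sigma$ that keeps every witness inside the set $\varphi_{j}(n)$, so that $\chi$ is a legitimate sequence landing in the correct $J$-value; an unbounded quantifier would range over the whole proper-class universe and could not be captured by J5. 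The universal case is handled dually, or by reduction to the existential case through the negation clause already established.

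Finally, Theorem~\ref{TransferA} itself is the specialization of the {\L}o\'{s}-type lemma to constant sequences $\varphi_{i}=c_{a_{i}}$. Then $J(c_{a_{i}})={}^{\ast}a_{i}$, and the truth set $\{n:\sigma(a_{1},\dots,a_{k})\}$ is either all of $\mathbb{N}$, when $\sigma(a_{1},\dots,a_{k})$ holds, or $\emptyset$, when it fails. Because $\mathbb{N}\in\mathcal{U}$ and $\emptyset\notin\mathcal{U}$, the lemma delivers $\sigma({}^{\ast}a_{1},\dots,{}^{\ast}a_{k})\iff\sigma(a_{1},\dots,a_{k})$, which is the assertion of the theorem.
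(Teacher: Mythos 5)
Your proposal is correct and takes essentially the same approach the paper relies on: the paper states Theorem~\ref{TransferA} without proof, citing \cite[Th. 6.2]{BeDi} and \cite[Cor. 8.3.13]{KaRe} and remarking that transfer \textquotedblleft is an application of a famous theorem of {\L}o\'{s}\textquotedblright, and those proofs run exactly along your lines --- the non-principal ultrafilter $\mathcal{U}=\{A\subseteq\mathbb{N}:\alpha\in{}^{\ast}A\}$, a {\L}o\'{s}-type lemma for bounded formulas proved by induction on $\sigma$ (with choice at the bounded existential step), then specialization to constant sequences $c_{a_{i}}$ with $J(c_{a_{i}})={}^{\ast}a_{i}$. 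The one point to tighten in a full write-up is that J5 requires $\chi(n)\in\psi(n)$ for \emph{all} $n$, so in the right-to-left direction of your membership clause and in the existential witness step you must first redefine the sequence off the qualified set (where $\psi(n)$ or $\varphi_{j}(n)$ may be empty) and invoke your equality lemma to see the $J$-value is unchanged --- a one-line patch your framework already supports.
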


The following useful theorem (known as the Internal Definition Principle) is
a rather straightforward consequence of the transfer principle.

\begin{theorem}
If $\sigma ( x,x_{1},...,x_{k}) $ is a bounded formula in the first-order
language $\mathcal{L}=\left\{ \in \right\} $ and $b,b_{1},...,b_{k}$ are
internal sets, then $\left\{ x\in b:\sigma ( x,b_{1},...,b_{k}) \right\} $
is an internal set.
\end{theorem}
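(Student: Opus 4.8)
The plan is to realize the set $\{x\in b:\sigma(x,b_1,\dots,b_k)\}$ as the value of a hyper-extended function and then read off internality directly from Theorem \ref{TransferA}. First, since $b,b_1,\dots,b_k$ are internal, I would use the characterization recalled above (internal $=$ element of a hyper-extension) to pick sets $P,P_1,\dots,P_k$ with $b\in{}^{\ast}P$ and $b_i\in{}^{\ast}P_i$ for $1\le i\le k$. Working in the ambient ZFC, the Separation axiom then supplies an honest function on a set,
\[
G\colon P\times P_1\times\cdots\times P_k\longrightarrow \mathcal{P}\Big(\bigcup P\Big),\qquad G(u,u_1,\dots,u_k)=\{x\in u:\sigma(x,u_1,\dots,u_k)\},
\]
the codomain being legitimate because $G(u,u_1,\dots,u_k)\subseteq u\subseteq\bigcup P$.

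Next I would transfer the true statement ``$G$ is a function from $P\times\cdots\times P_k$ into $\mathcal{P}(\bigcup P)$'', obtaining that ${}^{\ast}G$ is a function from ${}^{\ast}(P\times\cdots\times P_k)$ into ${}^{\ast}\mathcal{P}(\bigcup P)$. Since $b\in{}^{\ast}P$ and $b_i\in{}^{\ast}P_i$, transfer of the evident membership facts gives $(b,b_1,\dots,b_k)\in{}^{\ast}(P\times\cdots\times P_k)$, so the value ${}^{\ast}G(b,b_1,\dots,b_k)$ is defined and lies in ${}^{\ast}\mathcal{P}(\bigcup P)$; being an element of a hyper-extension, it is internal. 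It then remains only to identify this value with the set we began with. For that I would transfer the bounded sentence
\[
\forall u\in P\ \forall u_1\in P_1\cdots\forall u_k\in P_k\ \forall x\in\bigcup P\ \big(x\in G(u,\vec u)\leftrightarrow(x\in u\wedge\sigma(x,\vec u))\big),
\]
which holds in ZFC by the very definition of $G$, and then specialize its starred form to $u=b$, $u_i=b_i$. Since every element of $b$ lies in ${}^{\ast}(\bigcup P)$ (again by transfer), this yields $x\in{}^{\ast}G(b,\vec b)\Leftrightarrow(x\in b\wedge\sigma(x,\vec b))$ for all $x$, that is ${}^{\ast}G(b,\vec b)=\{x\in b:\sigma(x,\vec b)\}$, which is therefore internal.

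The choices of $P,P_i$ and the transfers of membership are routine consequences of the stated characterization of internal sets and Theorem \ref{TransferA}. The one point demanding care is the treatment of $G$: I must ensure it is a genuine set-function rather than a proper-class operation, which is exactly what forces everything to be bounded inside the fixed sets $P,P_i$ and $\mathcal{P}(\bigcup P)$, and I must check that the displayed membership sentence is bounded so that transfer applies. This is where the boundedness of $\sigma$ is used: unfolding $x\in G(u,\vec u)$ as $\exists w\in\mathcal{P}(\bigcup P)\,((u,\vec u,w)\in G\wedge x\in w)$ keeps the extra quantifier bounded, so the whole formula stays transferable. Once this bookkeeping is in place the conclusion is immediate; alternatively one could bypass ${}^{\ast}G$ and compute with $J$ directly via ${}^{\ast}G(J(\varphi))=J(G\circ\varphi)$ for $\varphi(n)=(\beta(n),\beta_1(n),\dots,\beta_k(n))$, where $b=J(\beta)$ and $b_i=J(\beta_i)$, exhibiting the sequence whose $J$-value is the required set.
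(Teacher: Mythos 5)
Your proof is correct and takes exactly the route the paper intends: the paper states this Internal Definition Principle without proof, remarking only that it is ``a rather straightforward consequence of the transfer principle,'' and your argument---bounding the parameters inside sets $P,P_{1},\dots,P_{k}$, forming $G$ by Separation in the ambient ZFC, and transferring its bounded defining property via Theorem \ref{TransferA}---is the standard way of making that remark precise. The only detail left tacit is that elements of ${}^{\ast}G(b,\vec{b})$ also lie in ${}^{\ast}\bigl(\bigcup P\bigr)$ (immediate from ${}^{\ast}G(b,\vec{b})\in{}^{\ast}\mathcal{P}\bigl(\bigcup P\bigr)$ and one more transfer), which is needed to upgrade the equivalence quantified over ${}^{\ast}\bigl(\bigcup P\bigr)$ to the asserted set equality ${}^{\ast}G(b,\vec{b})=\{x\in b:\sigma(x,\vec{b})\}$.
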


Another notion which is frequently used in nonstandard analysis is the
so-called countable saturation.

\begin{theorem}[{see \protect\cite[Th. 4.4]{BeDi}}]
Let $\left\{ A_{n}:n\in \mathbb{N}\right\} $ be a countable family of
internal sets with the finite intersection property. Then the intersection $%
\bigcap\nolimits_{n\in \mathbb{N}}A_{n}\neq \emptyset $.
\end{theorem}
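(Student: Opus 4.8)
The plan is to exploit the fact, implicit in axioms J2--J5, that $J$ behaves like evaluation of $\ast$-extended functions at the ideal point $\alpha$, so that internal sets may be represented by sequences and membership is governed by a non-principal ultrafilter on $\mathbb{N}$. Countable saturation then follows from the classical diagonal argument in that setting, the non-principality providing the countable incompleteness that makes the diagonal work.

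First I would reduce to a decreasing chain. Put $B_{m}=\bigcap_{n=0}^{m}A_{n}$. Each $B_{m}$ is internal, since by the Internal Definition Principle a finite intersection of internal sets is internal (write $B_{m}=\{x\in A_{0}:x\in A_{1}\wedge\dots\wedge x\in A_{m}\}$). The finite intersection property gives $B_{m}\neq\emptyset$, the chain $B_{0}\supseteq B_{1}\supseteq\cdots$ is decreasing, and $\bigcap_{n}A_{n}=\bigcap_{m}B_{m}$. Hence it suffices to treat a decreasing sequence $A_{0}\supseteq A_{1}\supseteq\cdots$ of nonempty internal sets; since each is internal, fix sequences $\theta_{m}$ with $A_{m}=J(\theta_{m})$.

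Next comes the technical backbone. I would introduce $\mathcal{U}=\{A\subseteq\mathbb{N}:\alpha\in{}^{\ast}A\}$ and check, using J2, J4, J5 and transfer, that $\mathcal{U}$ is an ultrafilter on $\mathbb{N}$; it is non-principal because $\alpha=J(\mathrm{id})\notin\mathbb{N}$ by J3, so $\mathcal{U}$ contains every cofinite set, whence each $I_{m}=\{k\in\mathbb{N}:k\geq m\}$ lies in $\mathcal{U}$ while $\bigcap_{m}I_{m}=\emptyset$ (countable incompleteness). The crucial \L o\'{s}-type lemma, derivable from J2 and J5, is the membership criterion
\[
J(\varphi)\in J(\theta)\iff\{k\in\mathbb{N}:\varphi(k)\in\theta(k)\}\in\mathcal{U},
\]
together with the analogous equivalences characterising $J(\varphi)=J(\theta)$, nonemptiness $J(\theta)\neq\emptyset$, and inclusion $J(\theta_{j})\subseteq J(\theta_{j-1})$ by the $\mathcal{U}$-largeness of the corresponding coordinatewise sets.

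Finally I would run the diagonal. Translating $A_{m}\neq\emptyset$ and $A_{m}\subseteq A_{m-1}$ through these criteria shows that
\[
V_{m}=\{k:\theta_{m}(k)\neq\emptyset\}\cap\bigcap_{j=1}^{m}\{k:\theta_{j}(k)\subseteq\theta_{j-1}(k)\}
\]
lies in $\mathcal{U}$ and that $V_{0}\supseteq V_{1}\supseteq\cdots$. Setting $W_{m}=V_{m}\cap I_{m}$ yields a decreasing sequence of $\mathcal{U}$-sets with $\bigcap_{m}W_{m}=\emptyset$, so for $k\in W_{0}$ the level $\mu(k)=\max\{m:k\in W_{m}\}$ is well defined; since $k\in W_{\mu(k)}\subseteq V_{\mu(k)}$ one has $\theta_{\mu(k)}(k)\neq\emptyset$ and the nested inclusions $\theta_{\mu(k)}(k)\subseteq\cdots\subseteq\theta_{0}(k)$, so I may choose $\Phi(k)\in\theta_{\mu(k)}(k)$ (and $\Phi(k)$ arbitrary off $W_{0}$). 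For fixed $m$ and $k\in W_{m}$ one has $\mu(k)\geq m$, hence $\Phi(k)\in\theta_{\mu(k)}(k)\subseteq\theta_{m}(k)$; thus $\{k:\Phi(k)\in\theta_{m}(k)\}\supseteq W_{m}\in\mathcal{U}$, and the membership criterion gives $J(\Phi)\in A_{m}$ for every $m$, so $J(\Phi)\in\bigcap_{m}A_{m}$. The diagonal bookkeeping is routine; the main obstacle is the backbone, namely verifying from the five axioms that $\mathcal{U}$ is a non-principal ultrafilter and establishing the \L o\'{s}-type criteria for membership, equality and inclusion, on which every later step relies.
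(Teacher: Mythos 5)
Your proposal is correct, and it takes essentially the approach of the paper's source: the paper itself gives no proof of this theorem, deferring to \cite[Th. 4.4]{BeDi}, and your argument --- representing the internal sets as $J$-values of sequences, extracting the non-principal ultrafilter $\mathcal{U}=\{A\subseteq \mathbb{N}:\alpha \in {}^{\ast }A\}$ of qualified sets, establishing the \L o\'{s}-type criteria for membership, equality and inclusion, and then diagonalizing along a decreasing chain --- is precisely the standard Alpha-Theory proof given there. The only loose point is your attribution of the \L o\'{s}-type criteria to J2 and J5 alone: as stated in the appendix, J5 quantifies over \emph{all} $n\in \mathbb{N}$, so the almost-everywhere criteria are most cleanly derived from the transfer principle applied to the set $E=\{k\in \mathbb{N}:\varphi (k)\in \theta (k)\}$ together with the identity ${}^{\ast }\varphi (\alpha )=J(\varphi )$, both of which the appendix records.
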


Let $(\mathbb{R},+,\cdot ,\leq )$ be the complete ordered field of real
numbers. Then, by transfer, we obtain an ordered field\ $(^{\ast }\mathbb{R}%
,^{\ast }+,^{\ast }\cdot ,^{\ast }\leq )$. There is a common practice to
omit \textquotedblleft stars\textquotedblright\ when no confusion can arise.
Notice that $^{\ast }\mathbb{R=}$ $\left\{ \varphi (\alpha ):\varphi :%
\mathbb{N}\rightarrow \mathbb{R}\right\} $ and hence $\left\{ ^{\ast }x:x\in
\mathbb{R}\right\} \subset $ $^{\ast }\mathbb{R}$. Although, in general, $%
x\neq \,^{\ast }x$ we do not usually distinguish between $x$ and $^{\ast }x$
and regard the set of reals as a subset of $^{\ast }\mathbb{R}$. Elements of
$^{\ast }\mathbb{R}$ are called hyperreals.

\begin{definition}
A hyperreal number $x$ is said to be

\begin{enumerate}
\item[(i)] bounded if $x=O( 1) $, i.e., $\left\vert x\right\vert \leq c$ for
some $c\in \mathbb{R},$

\item[(ii)] infinitesimal if $x=o( 1) $, i.e., $\left\vert x\right\vert \leq
\varepsilon $ for every positive $\varepsilon \in \mathbb{R}$,

\item[(iii)] unbounded if $1/x=o( 1) $.
\end{enumerate}
\end{definition}

Notice that $\alpha >n$ for every $n\in \mathbb{N}$ (see \cite[Prop. 2.5]%
{BeDi}) and hence $1/\alpha $ is an example of a (nonzero) infinitesimal. We
say that $x$ and $y$ are infinitely close, denoted by $x\simeq y$, if $x-y$
is infinitesimal. This defines an equivalence relation on\ $^{\ast }\mathbb{R%
}$ and the monad (or the halo) of $x$ is the equivalence class
\begin{equation*}
\mon(x)=\left\{ y\in \,^{\ast }\mathbb{R}:x\simeq y\right\} .
\end{equation*}%
We say that $x$ and $y$ are of bounded distance apart, denoted by $x\sim y,$
if $x-y$ is bounded. The galaxy of $x$ is the equivalence class
\begin{equation*}
\gal(x)=\left\{ y\in \,^{\ast }\mathbb{R}:x\sim y\right\}
\end{equation*}%
(see \cite{Gol, LoWo} and references therein). If a hyperreal $x$ is
bounded, i.e., $x\in \gal(0)$, the unique $a\in \mathbb{R}$ such that $%
x\simeq a$ is called the standard part of $x$ and is denoted by $\st(x)$.
These notions can be generalized in the following way. Let $X$ be a real
Banach space and let $^{\ast }X$ be its hyper-extension endowed with a
function $^{\ast }\Vert \cdot \Vert :~^{\ast }X\rightarrow ~^{\ast }\mathbb{R%
}$ called an internal norm (or $\ast $-norm) of $^{\ast }X$. By transfer, $%
^{\ast }\Vert \cdot \Vert $ is homogeneous over $^{\ast }\mathbb{R}$ and
satisfies the triangle inequality. As before, we do not distinguish between $%
x$ and $^{\ast }x$, and abbreviate $^{\ast }\Vert \cdot \Vert $ to $\Vert
\cdot \Vert $. The monad of $x\in \,^{\ast }X$ is the equivalence class $\mon%
(^{\ast }X,x)=\left\{ y\in \,^{\ast }X:\left\Vert x-y\right\Vert \simeq
0\right\} $ ($\mon(x)$ for brevity) and the galaxy of $x$ is the equivalence
class $\gal(^{\ast }X,x)=\left\{ y\in \,^{\ast }X:\left\Vert x-y\right\Vert
\sim 0\right\} .$ The set $\gal(^{\ast }X,0)$ is called the principal galaxy
and denoted by $\gal(^{\ast }X)$. Let $\mon(X)=\bigcup_{x\in X}\mon(x).$
Notice that in general $\mon(X)$ is a proper subset of $\gal(^{\ast }X)$. If
$x\in \mon(X)$, the unique $a\in X$ such that $\left\Vert x-a\right\Vert
\simeq 0$ is called the standard part of $x$ and is denoted, as in a real
case, by $\st(x).$ We refer to $\st:\mon(X)\rightarrow X$ as the standard
part mapping.

It was proved in \cite[Th. 6.4]{BeDi}, that ZFC is
faithfully interpretable in the Alpha-Theory, i.e., a sentence $\sigma $ in
the language $\mathcal{L}=\left\{ \in \right\} $ is a theorem of ZFC if and
only if its relativization $\sigma ^{WF}$ to the class of well-founded sets
is a theorem of the Alpha-Theory. In other words, the Alpha Theory proves
those and only those statements ($\in $-statements, to be precise) about
well-founded sets which ZFC proves about all sets.


\begin{thebibliography}{99}
\bibitem{AkKh} A. G. Aksoy, M. A. Khamsi, Nonstandard Methods in Fixed Point
Theory, Springer-Verlag, New York-Berlin, 1990.

\bibitem{BKS} L. P. Belluce, W. A. Kirk, E. F. Steiner, Normal structure in
Banach spaces, Pacific J. Math. 26 (1968), 433--440.

\bibitem{BeDi} V. Benci, M. Di Nasso, Alpha-Theory: an elementary axiomatics
for nonstandard analysis, Expo. Math. 21 (2003), 355--386.

\bibitem{BeDiFo} V. Benci, M. Di Nasso, and M. Forti, The eightfold path to
nonstandard analysis,\textit{\ }in: Nonstandard Methods and Applications in
Mathematics, (Eds. N.J. Cutland, M. Di Nasso, D.A. Ross), Lecture Notes in
Logic 25, ASL, A K Peters, 2006.

\bibitem{BoDu} F. F. Bonsall, J. Duncan, Numerical ranges. II, Cambridge
University Press, London, 1973.

\bibitem{Ch} A. Chollet et al., Foundational aspects of multiscale
digitization, Theoret. Comput. Sci. 466 (2012), 2--19.

\bibitem{Di1} M. Di Nasso, An axiomatic presentation of the nonstandard
methods in mathematics. J. Symbolic Logic 67 (2002), 315--325.

\bibitem{DKK} S. Dhompongsa, A. Kaewcharoen, A. Kaewkhao, Fixed point
property of direct sums, Nonlinear Anal. 63 (2005), e2177--e2188.

\bibitem{DhSa} S. Dhompongsa, S. Saejung, Geometry of direct sums of Banach
spaces, Chamchuri J. Math. 2 (2010), 1--9.

\bibitem{FaKe} S. Fajardo, H. J. Keisler, Neometric spaces, Adv. Math. 118
(1996), 134--175.

\bibitem{GaLl} J. Garc\'{\i}a Falset, E. Llor\'{e}ns Fuster, Normal
structure and fixed point property, Glasgow Math. J. 38 (1996), 29--37.

\bibitem{Go} K. Goebel, On the structure of minimal invariant sets for
nonexpansive mappings, Ann. Univ. Mariae Curie-Sk{\l }odowska Sect. A 29
(1975), 73--77.

\bibitem{GoKi} K. Goebel, W. A. Kirk, Topics in Metric Fixed Point Theory,
Cambridge University Press, Cambridge, 1990.

\bibitem{Gol} R. Goldblatt, Lectures on the Hyperreals. An Introduction to
Nonstandard Analysis, Springer-Verlag, New York, 1998.

\bibitem{HeMo} C. W. Henson, L. C. Moore Jr., Nonstandard analysis and the
theory of Banach spaces, in: Nonstandard Analysis--Recent Developments, A.
E. Hurd (ed.), Springer, Berlin, 1983, pp. 27--112.

\bibitem{Ji} A. Jim\'{e}nez-Melado, Stability of weak normal structure in
James quasi reflexive space, Bull. Austral. Math. Soc. 46 (1992), 367--372.

\bibitem{KaRe} V. Kanovei and M. Reeken, Nonstandard Analysis,
Axiomatically, Springer-Verlag, Berlin, 2004.

\bibitem{Ka} L. A. Karlovitz, Existence of fixed points of nonexpansive
mappings in a space without normal structure, Pacific J. Math. 66 (1976),
153--159.

\bibitem{KaTa1} M. Kato and T. Tamura, Weak nearly uniform smoothness and
worth property of $\psi $-direct sums of Banach spaces, Comment. Math. Prace
Mat. 46 (2006), 113--129.

\bibitem{KaTa2} M. Kato and T. Tamura, Uniform non-$\ell _{1}^{n}$-ness of $%
\ell _{\infty }$-sums of Banach spaces, Comment. Math. 49 (2009), 179--187.

\bibitem{Ma} B. Maurey, Points fixes des contractions de certains faiblement
compacts de $L^{1}$, Semin. Anal. Fonct. 1980-1981, Ecole Polytechnique,
Palaiseau, 1981.

\bibitem{Ng} S.-A. Ng, Nonstandard Methods in Functional Analysis. Lectures
and Notes, World Scientific, Hackensack, NJ, 2010.

\bibitem{LoWo} Nonstandard Analysis for the Working Mathematician, P. A.
Loeb, M. Wolff (eds.), Kluwer Academic Publishers, Dordrecht, 2000.

\bibitem{PrWi} S. Prus, A. Wi\'{s}nicki, On the fixed point property in
direct sums of Banach spaces with strictly monotone norms, Studia Math. 186
(2008), 87--99.

\bibitem{Sa} S. Sanders, On the connection between nonstandard analysis and
constructive analysis, Logique et Anal. 56 (2013), 183--210.

\bibitem{Se} Ya. D. Sergeyev, Methodology of numerical computations with
infinities and infinitesimals, Rend. Semin. Mat. Univ. Politec. Torino 68
(2010), 95--113.

\bibitem{Si} B. Sims, Ultra-techniques in Banach Space Theory, Queen's
Papers in Pure and Applied Math., Vol. 60, Queen's University, Kingston,
Ontario, 1982.

\bibitem{SiSm} B. Sims, M. A. Smyth, On some Banach space properties
sufficient for weak normal structure and their permanence properties, Trans.
Amer. Math. Soc. 351 (1999), 497--513.

\bibitem{TaKaSa2} Y. Takahashi, M. Kato, K.-S. Saito, Strict convexity of
absolute norms on $\mathbb{C}^{2}$ and direct sums of Banach spaces, J.
Inequal. Appl. 7 (2002), 179--186.

\bibitem{Wi} A. Wi\'{s}nicki, On the super fixed point property in product
spaces, J. Funct. Anal. 236 (2006), 447--456, corrigendum, ibid., 254
(2008), 2313--2315.

\bibitem{Wi1} A. Wi\'{s}nicki, On the fixed points of nonexpansive mappings
in direct sums of Banach spaces, Studia Math. 207 (2011), 75-84.

\bibitem{Wi2} A. Wi\'{s}nicki, The fixed point property in direct sums and
modulus R(a,X), Bull. Aust. Math. Soc., doi:10.1017/S0004972713000440, to
appear.

\bibitem{Xu} H. K. Xu, Iterative algorithms for nonlinear operators, J.
London Math. Soc. 66 (2002), 240--256.\bigskip
\end{thebibliography}
\end{document}